\documentclass{article}

\usepackage{fullpage}
\usepackage{amsmath,amsfonts,amssymb,amsthm,thmtools,hyperref,cleveref,enumerate,color}
\usepackage[most]{tcolorbox}

\newtheorem{theorem}{Theorem}[section]
\newtheorem{proposition}[theorem]{Proposition}
\newtheorem{lemma}[theorem]{Lemma}
\newtheorem{corollary}[theorem]{Corollary}
\theoremstyle{definition}
\newtheorem{definition}[theorem]{Definition}

\providecommand{\sqbinom}[2]{\genfrac{[}{]}{0pt}{}{#1}{#2}}
\providecommand{\FF}{\mathbb{F}}
\providecommand{\RR}{\mathbb{R}}
\DeclareMathOperator{\codim}{codim}
\providecommand{\subs}{\triangleleft}

\providecommand{\blueemail}[1]{\href{mailto:#1}{\textcolor{blue}{\texttt{#1}}}}
\providecommand{\Cset}{C}
\providecommand{\Nmain}{N_{\ref{thm:main}}}
\providecommand{\Nlist}{N_{\ref{pro:step1}}}
\providecommand{\Nhomo}{N_{\ref{lem:homogeneous}}}
\providecommand{\Nramsey}{N_{\ref{lem:ramsey-induction}}}
\providecommand{\Nramseyc}{N_{\ref{lem:ramsey-conclusion}}}
\providecommand{\Nramaff}{N_{\ref{pro:ramsey-affine}}}
\providecommand{\Nrampro}{N_{\ref{pro:ramsey-projective}}}
\providecommand{\Nramprohelp}{N_{\ref{lem:ramsey-projective-helper}}}
\providecommand{\NHJ}{N_{\ref{pro:HJ}}}
\providecommand{\Alpha}{\mathrm{A}}
\providecommand{\hatAlpha}{\hat{\Alpha}}

\title{Boolean degree one functions on the Grassmann scheme}
\author{Yuval Filmus\footnote{Taub Faculty of Computer Science and Faculty of Mathematics, Technion Israel Institute of Technology, \blueemail{yuvalfi@cs.technion.ac.il}. Research funded by ISF grant 507/24.}}

\begin{document}

\maketitle
\begin{abstract}
Ferdinand Ihringer proved that Boolean degree one functions on the Grassmann scheme $J_q(n,k)$ are trivial when $\min(k,n-k) \ge 2$ and $n$ is large enough. We provide a mostly self-contained exposition of this result. All results were formalized by ChatGPT~5.6~Sol, and are available on GitHub at \href{https://github.com/YuvalFilmus/grassmann-degree-one}{\texttt{YuvalFilmus/grassmann-degree-one}}.
\end{abstract}

\section{Introduction}
\label{sec:introduction}

The Grassmann scheme $J_q(n,k)$ consists of all $k$-dimensional subspaces of $\FF_q^n$. A function $f\colon J_q(n,k) \to \RR$ is \emph{Boolean} if $f(V) \in \{0,1\}$ for all $V \in J_q(n,k)$. It has \emph{degree one} if it can be written as
\[
 f = \sum_p c_p x_p,
\]
where the sum goes over all points (one-dimensional subspaces of $\FF_q^n$), and $x_p \in \{0,1\}$ is the indicator of the input containing $p$. We could also allow a constant term, but this is unnecessary since
\[
 \sum_p x_p = \sqbinom{k}{1}_q
\]
over $J_q(n,k)$. (On the right-hand side we have a $q$-binomial coefficient, which in this case equals $q^{k-1} + \cdots + 1$.)

\begin{tcolorbox}[colback=blue!5!white,colframe=blue!75!black]
  Which degree one functions on $\FF_q^n$ are Boolean?
\end{tcolorbox}

The obvious examples are $0, 1, x_p, 1-x_p$. A less obvious example is the function $y_r \in \{0,1\}$, which is the indicator of the input being \emph{orthogonal} to $r$:
\[
 y_r = 1 - \left(\sqbinom{k}{1}_q - \sqbinom{k-1}{1}_q\right)^{-1} \sum_{p \not\perp r} x_p.
\]

Incidentally, this shows that the property of having degree one is preserved by the duality of $J_q(n,k)$ and $J_q(n,n-k)$. Given a function $f$ on $J_q(n,k)$, we can define its dual $f^\perp$ on $J_q(n,n-k)$ by $f^\perp(V) = f(V^\perp)$. Since $x_p^\perp = y_p$, it follows that $f$ has degree one iff $f^\perp$ has degree one.

We can also combine $x_p$ and $y_r$ if they do not ``conflict''. This leads to the following definition.
\begin{definition} \label{def:trivial}
A Boolean degree one function on $J_q(n,k)$ is \emph{trivial} if it belongs to the following list:
\[
 0, 1, x_p, 1-x_p, y_r, 1-y_r, x_p+y_r, 1-x_p-y_r,
\]
where in the latter two cases, $p \not\perp r$.

We say that $J_q(n,k)$ is \emph{degree-one trivial (DOT)} if all Boolean degree one functions on $J_q(n,k)$ are trivial.
\end{definition}

If $k = 1$ then every function has degree one, and so in general $J_q(n,k)$ is not DOT (unless $n$ is very small). By duality, the same holds when $k = n-1$. When $q=2$, these are the only problematic cases: Filmus and Ihringer~\cite{FI19} showed that $J_q(n,k)$ is DOT whenever $\min(k,n-k) \ge 2$. 

Surprisingly, when $q > 2$, there are Boolean degree one functions on $J_q(4,2)$ which are not trivial. Here is an example for $q = 3$, due to Bruen and Drudge~\cite[Corollary 2.1]{BD99}.\footnote{Bruen and Drudge's construction is parametrized by an elliptic quadric $\mathcal{O}$. In this case, $\mathcal{O} = Q^{-1}(0)$. We thank ChatGPT~5.5 for this explicit instantiation.} For a point $p = \langle(x,y,z,w)\rangle$, let
\[
 Q(p) = x^2 + y^2 + z^2 - w^2,
\]
which is well-defined since $Q(x,y,z,w) = Q(2x,2y,2z,2w)$. The counterexample is
\[
 f = \frac{1}{2} \sum_{Q(p) = 0} x_p + \frac{1}{6} \sum_{Q(p) = 1} x_p - \frac{1}{6} \sum_{Q(p) = 2} x_p.
\]
To check that $f$ is Boolean, observe that for any line (two-dimensional subspace) $\ell$, the reduced quadratic $Q|_\ell$ is equivalent to one of $xy,x^2,2x^2,x^2+y^2$. The corresponding weight distributions (number of points on $\ell$ on which $Q$ takes each value $0,1,2$) are $(2,1,1),(1,3,0),(1,0,3),(0,2,2)$, and the corresponding values of $f$ are $1,1,0,0$.

\begin{tcolorbox}[colback=blue!5!white,colframe=blue!75!black]
  When is $J_q(n,k)$ degree-one trivial?
\end{tcolorbox}

Drudge~\cite[Theorem 6.4]{DrudgePhD} showed that $J_3(n,2)$ is DOT for all $n \ge 5$, and this was extended to $J_4(n,2)$ and $J_5(n,2)$ by Gavrilyuk and Mogilnykh~\cite{GM14} and by Gavrilyuk and Matkin~\cite{GM18}, respectively. Filmus and Ihringer~\cite{FI19} extended this to $J_3(n,k),J_4(n,k),J_5(n,k)$ whenever $\min(k,n-k) \ge 2$ and $n \ge 5$. They did so using an inductive argument showing that if $J_q(n,k)$ is DOT then so is $J_q(n+1,k+1)$. 
What prevented them from proving a similar result for other values of $q$ is a missing base case. This base case was finally provided by Ihringer~\cite{Ihr24}.

\begin{theorem}[Ihringer] \label{thm:main}
For every prime power $q$ there exists a constant $\Nmain(q)$ such that if $n \ge \Nmain(q)$ and $\min(k,n-k) \ge 2$ then $J_q(n,k)$ is DOT.
\end{theorem}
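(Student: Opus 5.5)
The plan is to reduce everything to the case $k=2$ with $n$ large, and then handle that case by a Ramsey-type argument.

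\textbf{Reduction to $k=2$.} The paper mentions the inductive step of Filmus and Ihringer~\cite{FI19}: if $J_q(n,k)$ is DOT then so is $J_q(n+1,k+1)$. Duality preserves both degree one and triviality. Given $n$ and $k$ with $\min(k,n-k)\ge 2$, we may therefore assume $k \le n/2$ and climb from $J_q(n-k+2,2)$ to $J_q(n,k)$. It thus suffices to show that $J_q(m,2)$ is DOT for all $m \ge N(q)$, and then take $\Nmain(q)=2N(q)$ or similar.

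I would also re-prove the lifting step directly. Restrict $f$ on $J_q(n+1,k+1)$ to the star of each point $p$, which is isomorphic to $J_q(n,k)$ on $\FF_q^{n+1}/p$. The restriction is trivial, and compatibility across different points forces $f$ itself to be trivial.

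\textbf{Base case, $k=2$.} Write $f=\sum_p c_px_p$, Boolean on lines. Then $f(\ell)=\sum_{p\in\ell}c_p$ takes values in $\{0,1\}$ for every line $\ell$. Hence for any plane $\pi$, the restriction $f|_\pi$ is a Boolean degree one function on $J_q(3,2)$. By duality with $J_q(3,1)$, this is an arbitrary Boolean function of the dual points. So the point weights restricted to a plane are determined by an arbitrary $0/1$ labelling of the lines of that plane, through the inversion formula
\[
 c_p = \frac{1}{q}\Bigl(\sum_{\ell\ni p} f(\ell) - \frac{1}{q+1}\sum_{\ell\subseteq\pi} f(\ell)\Bigr)
\]
(up to normalization).

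The key is that the same $c_p$ must be consistent across all planes through $p$. The plan:
\begin{enumerate}[(i)]
\item Colour each plane (or each flag, or each small configuration) by the isomorphism type of $f$ restricted to it. There are finitely many colours depending only on $q$.
\item Apply a Ramsey theorem for vector spaces (Graham--Leeb--Rothschild, or a Hales--Jewett-based affine/projective version proven via auxiliary lemmas) to find a subspace $W$ of bounded but large dimension $d$. Inside $W$, $f$ is ``homogeneous'': for instance, all weights $c_p$ restricted to $W$ take values from a small set arranged in a structured way.
\item Show that a homogeneous Boolean degree one function on $J_q(d,2)$, for $d$ moderately large (say $d\ge 5$), must be trivial. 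The reason is that homogeneity forces the weight function to be constant, or concentrated on a point or hyperplane. Counting line sums then gives $x_p$, $y_r$, or their combinations.
\item Propagate triviality from $W$ to the whole space. Any line together with $W$ spans a space of dimension $d+2$. Using the consistency of $c_p$ over planes meeting $W$, the weights outside $W$ are forced.
\end{enumerate}

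\textbf{Main obstacle.} Step (ii)--(iii) is the heart of the argument. Ramsey theorems give homogeneity only for colourings of subspaces of a fixed dimension. However, $c_p$ depends on $f$ globally, not just on the local structure, so the colouring has to be chosen carefully. I would first try colouring each $3$-space by the vector of values of $c_p$ on its points, read in a canonical ordered basis. This is a finite colouring once we show that $c_p$ takes only finitely many possible values. That finiteness follows from the plane inversion formula: $c_p$ is a rational with bounded denominator and bounded numerator.

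Homogeneity in ordered-basis form (the affine/projective Ramsey propositions) then forces $c$ on $W$ to depend only on a ``leading coordinate'' type. The Boolean constraint should kill all non-trivial possibilities, such as the $J_q(4,2)$ counterexample, once $d\ge 5$.
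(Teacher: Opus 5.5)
Your reduction to $k=2$ is acceptable. Citing the Filmus--Ihringer lifting $J_q(n,k)\Rightarrow J_q(n+1,k+1)$ together with duality can replace the paper's restriction argument. The paper instead passes DOT from $J_q(a;b)$ to $J_q(a';b)$ by restricting to $(a+b)$-dimensional subspaces and walking in the Grassmann graph. Your plane inversion formula matches the paper's quantization lemma.

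\textbf{The gap is in the base case $k=2$, and it sits in step (iv).} If you colour points by $c_p$, Ramsey gives a subspace $W$ with $c_p\equiv c\in\{0,\frac{1}{q+1}\}$ on $W$. Then $f|_W$ is constant, so step (iii) is immediate and carries no information. The hope that the Boolean constraint kills non-trivial behaviour ``once $d\ge 5$'' is not addressed by (iii); DOT of $J_q(5,2)$ is not known for general $q$. Everything therefore rests on (iv), which is asserted, not argued.

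Take $p=\langle p'\rangle\notin W$. The points of $\langle W,p\rangle\setminus W$ are the $\langle w+p'\rangle$ with $w\in W$. The line constraints there say only that $g(w):=c_{\langle w+p'\rangle}$ has every affine-line sum in $\{-c,1-c\}$. A line through $p$ meets $W$ once but carries $q-1$ further unknown coefficients. So knowing $f$ on $W$ does not force $c_p=g(0)$ by any evident consistency argument. Restricting to $\langle W,\ell\rangle$ and invoking triviality there would be circular.

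The missing idea is a second layer of Ramsey arguments, relative to each outside point $p$:
\begin{enumerate}[(1)]
\item Colour \emph{vectors} $w\in W$ by $c_{\langle w+p'\rangle}$. You must colour vectors, not points, because $c_{\langle \alpha w+p'\rangle}$ depends on $\alpha$.
\item Apply the affine Ramsey theorem iteratively, once for each $\alpha\in\FF_q^\times$ and once more. This produces $W'\subseteq W$, a vector $t$, and $u\neq 0$ such that $W'+t+p'$ and $\langle W',t\rangle+\alpha u+p'$ are homogeneous for every $\alpha\neq 0$.
\item Replace $f$ by $1-f$ if $c=\frac{1}{q+1}$, so that $c=0$. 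The line through $\langle u\rangle$ and $\langle t+p'\rangle$ then forces all these homogeneous values to coincide.
\item The line through $\langle t+u\rangle$ and $p$ then gives $c_p\in\{-\frac{q-1}{q},0,\frac1q,1\}$.
\end{enumerate}
Even then, a separate global argument is needed to reach triviality. One shows that the points with coefficient in $\{0,1\}$ form a subspace of codimension at most one, and then does a short case analysis.

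Neither ingredient appears in your plan. Steps (ii)--(iv) as written would not produce them, so the base case, which is the real content of Ihringer's theorem, remains unproved.
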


In this paper we give an exposition of Ihringer's proof which is nearly self-contained. We also replace the inductive argument of Filmus and Ihringer by a more direct one.

The optimal value $\Nmain(q)$ is not known. When $q = 2$, the optimal value is $\Nmain(2) = 4$, a result which we also reproduce. Ihringer's proof gives a value of $\Nmain(q)$ which has a tower dependence on $q$, resulting from the application of Ramsey theorems. However, at the moment we cannot rule out that $\Nmain(q) = 5$ works for all $q$.

\subsection{Background}
\label{sec:background}

The study of Boolean degree one functions on the Grassmann scheme was initiated in a paper of Cameron and Liebler~\cite{CL82} on $J_q(4,2)$, and for this reason such functions are known as \emph{Cameron--Liebler line classes}. A blog post by Ferdinand Ihringer~\cite{IhringerBlog} describes many other interpretations of the same concept.

Boolean degree one functions on various domains also arise naturally in Erd\H{o}s--Ko--Rado (EKR) theory. The \emph{Johnson scheme} $J(n,k)$ is the set of all subsets of $[n] := \{1,\dots,n\}$ of size $k$. The classical EKR theorem~\cite{EKR61} states that if $\mathcal{F} \subseteq J(n,k)$ is \emph{intersecting} (any two sets in $\mathcal{F}$ intersect) and $k \leq n/2$, then $|\mathcal{F}| \leq \binom{n-1}{k-1}$. This is known as the \emph{EKR property} for $J(n,k)$. If furthermore $k < n/2$, then the only families achieving the bound are the \emph{stars} $\mathcal{F}_i = \{ S : i \in S \}$, which is known as the \emph{strict EKR property}.

There are many ways to prove the EKR theorem. Lov\'asz~\cite{Lovasz79} gave a spectral proof based on his celebrated theta function. When $k < n/2$, the proof shows that every intersecting family of maximum size is in the linear span of the stars (identifying a family with its characteristic function), which is known as the \emph{EKR module property}. Equivalently, every such intersecting family is a Boolean degree one function. Since the only degree one functions on the Johnson scheme (when $\min(k,n-k) \ge 2$) are $0, 1, x_i, 1-x_i$, it follows that such intersecting families are stars.

The EKR theorem has been extended to various other domains, including the Grassmann scheme~\cite{FW86}. In many cases, including that of the Grassmann scheme, the only known proof is spectral, and the proof implies the EKR module property; see the monograph~\cite{GM16} for a thorough study. Understanding the extremal families thus reduces to understanding Boolean degree one functions on these various domains. Godsil and Meagher~\cite{GM09} developed a specialized technique to this end, which only applies to intersecting families.

In the case of $t$-intersecting families (every two objects in the family have at least $t$ points in common), the spectral approach shows that extremal families are Boolean degree $t$ functions. Boolean degree $t$ functions over the Boolean cube $\{0,1\}^t$ have a special structure: they depend on $O(2^t)$ coordinates~\cite{CHS20,Wellens22}, and they can be computed by decision trees of depth $O(t^3)$~\cite{Mid04}. This follows from a well-developed theory of complexity measures~\cite{BdW02}, which has been extended to other domains~\cite{DFLLV21}. Unfortunately, the relevant techniques do not seem to extend to the Grassmann scheme.

\subsection{Proof overview}
\label{sec:proof-overview}

The proof consists of two parts. The first part shows that $J_q(n,2)$ is DOT for all $n \ge \Nlist(q)$, and the second part deduces \Cref{thm:main}.

The first part itself is composed of two steps. The first step shows that, after possibly replacing $f$ by $1-f$, the coefficients $c_p$ in the representation $f = \sum_p c_p x_p$ belong to the set
\[
 -\frac{q-1}{q}, 0, \frac{1}{q}, 1.
\]
The argument, which uses Ramsey theory, appears in \Cref{sec:coeffs}.

The second step considers the patterns of coefficients on lines and planes to deduce the DOT property, and appears in \Cref{sec:lines}.

The second part derives the full \Cref{thm:main}. To explain the basic idea, we switch notation: $J_q(a;b)$ is the same as $J_q(a+b,b)$. We show that if $J_q(a;b)$ is DOT then $J_q(a';b)$ is also DOT for all $a' \ge a$ by considering restrictions of $J_q(a';b)$ to copies of $J_q(a;b)$. Dually, if $J_q(a';b)$ is DOT then $J_q(a';b')$ is also DOT for all $b' \ge b$, completing the proof of \Cref{thm:main}. This argument appears in \Cref{sec:restriction}.

The proof in the second part, while simple in principle, involves some case analysis. The reader might prefer reading first the easier case of the Johnson scheme (where the DOT property is also easy to prove directly), which appears in \Cref{sec:restriction-johnson}.

When $q = 2$, \Cref{thm:main} holds for $\Nmain(2) = 4$. We prove this in \Cref{sec:F2}.

The proof of \Cref{thm:main} uses two Ramesey theorems, which we deduce in~\Cref{sec:ramsey} from the Hales--Jewett theorem for completeness.

\section{Isolating the coefficients}
\label{sec:coeffs}

The goal of this section is to prove the following result.

\begin{proposition} \label{pro:step1}
For every prime power $q$ there exists $\Nlist(q)$ such that the following holds for all $n \ge \Nlist(q)$.

If $f\colon J_q(n,2) \to \RR$ is a Boolean degree one function then either
\[
 f = \sum_p c_p x_p, \text{ where } c_p \in \left\{-\frac{q-1}{q}, 0, \frac{1}{q}, 1 \right\} \text{ for all } p,
\]
or the same holds for $1 - f$.
\end{proposition}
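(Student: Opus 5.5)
The plan is to first confine the coefficients to a bounded arithmetic progression, and then pin them down using two Ramsey-type results (a projective one and an affine one), applied with finitely many colours.

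\textbf{Step 1: local identity and a finite palette.} Let $f=\sum_p c_p x_p$ be Boolean on $J_q(n,2)$. Fix a plane $\pi$ (a $3$-dimensional subspace) and a point $p\in\pi$. Summing $f$ over the $q+1$ lines of $\pi$ through $p$ counts $p$ exactly $q+1$ times and every other point of $\pi$ exactly once. This gives
\[
 q\,c_p = N_p(\pi) - S_\pi, \qquad N_p(\pi)\in\{0,\dots,q+1\},\quad S_\pi=\sum_{r\in\pi}c_r .
\]
Any two points lie in a common plane. Hence all differences $c_p-c_r$ lie in $\frac1q\mathbb{Z}\cap[-\frac{q+1}{q},\frac{q+1}{q}]$. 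So all $c_p$ lie in $\alpha+\frac1q\mathbb{Z}$ for a single $\alpha$, inside a window of width $\frac{q+1}{q}$, and take at most $q+2$ distinct values. This yields a colouring of the points with boundedly many colours, the bound depending only on $q$.

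\textbf{Step 2: projective Ramsey and the normalisation.} I will apply the projective Ramsey theorem (the result referred to as $\Nrampro$, deduced from Hales--Jewett in \Cref{sec:ramsey}) to this colouring. For $n$ large it gives a subspace $U$ of some large prescribed dimension $d$ on which all $c_p$ equal a constant $c$. Evaluating $f$ on a line inside $U$ gives $(q+1)c\in\{0,1\}$, so $c\in\{0,\frac1{q+1}\}$. Since $\sum_{p\in\ell}\frac{1}{q+1}=1$ on every line, we have
\[
 1-f=\sum_p \left(\tfrac1{q+1}-c_p\right)x_p .
\]
So in the second case we replace $f$ by $1-f$, which is where the ``or $1-f$'' alternative arises. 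From now on $c_p=0$ on $U$. Applying the identity of Step 1 to a plane $\pi\subseteq U$ gives $S_\pi=0$, and hence $\alpha\equiv 0$, i.e.\ every $c_p\in\frac1q\mathbb{Z}$.

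\textbf{Step 3: affine Ramsey outside $U$.} Fix a point $p\notin U$, set $W=\langle p,U\rangle$, and note that $W\setminus U$ is an affine space of dimension $d$. Apply the affine Ramsey theorem ($\Nramaff$) to the coefficient colouring on $W\setminus U$, where $d$ is chosen large enough. This gives an affine plane $A\subseteq W\setminus U$ with constant coefficient $c'$. Each affine line of $A$ consists of $q$ points of value $c'$ together with a point at infinity in $U$ of value $0$. Therefore $qc'\in\{0,1\}$, i.e.\ $c'\in\{0,\frac1q\}$.

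\textbf{Step 4: pinning down $c_p$ (main obstacle).} Now take a point $p$ together with the plane $\pi=\langle p,m\rangle$, where $m$ is an affine line of $A$ chosen not through $p$ (if $p\in A$ there is nothing to show). The lines of $\pi$ through $p$ meet $m\cup\{m_\infty\}$ in one point each. Counting the points of $\pi$ gives $S_\pi=c_p+qc'+0+R$, where $R$ is the sum over the remaining $q^2-q-1$ points. Combining this with the identity from Step 1, and with the analogous identities at the points of $m$ (whose coefficients are known), should force
\[
 c_p\in\left\{-\tfrac{q-1}{q},\,0,\,\tfrac1q,\,1\right\}.
\]
The difficulty is controlling $R$. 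My first attempt will be to choose $m$ inside a larger monochromatic affine structure, so that the other lines through points of $m$ also pass through known values, and then to eliminate $R$ by comparing the identities at $p$ and at a point of $m$. The four allowed values should correspond to the cases $c'\in\{0,\frac1q\}$ combined with $f(\langle p,u\rangle)\in\{0,1\}$. I expect this elimination to be the crux, and it may require the monochromatic pieces to be of dimension $3$ rather than $2$. The final choice of $\Nlist(q)$ then comes from composing the two Ramsey bounds.
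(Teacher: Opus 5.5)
Your Steps 1--3 are sound. The identity $qc_p=N_p(\pi)-S_\pi$ is a valid and more explicit substitute for the paper's quantization lemma, which the paper proves via uniqueness of the degree-one representation on $J_q(3,2)$. Step 3 is the paper's observation that a homogeneous affine piece over $U$ has colour in $\{0,\frac1q\}$.

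\textbf{The gap is Step 4.} You flag it but do not carry it out, and the elimination you sketch does not work.

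Plane identities are just sums of line identities. Comparing the identity at $p$ with the one at $m_\infty$ (or at points of $m$) only gives $qc_p=N_p(\pi)-N_{m_\infty}(\pi)$, i.e.\ $c_p\in\frac1q\mathbb{Z}$ with $|c_p|\le\frac{q+1}{q}$, which you already have. To get more, you must know $f$ on lines through $p$, hence the coefficients of their other points.

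A line through $p=\langle p'\rangle$ meeting $U$ has the form $\langle p',v\rangle$ with $v\in U$. Its points other than $p$ are $\langle v\rangle$ and $\langle \beta v+p'\rangle$ for $\beta\in\FF_q^\times$. Take $A=X+p'$ with $X\subseteq U$ affine and $0\notin X$ (the only nontrivial case). Then $X$ contains at most one of the vectors $\beta v$, since two of them would force $\langle v\rangle\subseteq X$ and hence $0\in X$. So every line through $p$ meets $A$ in at most one point. It therefore has at least $q-2$ points whose coefficients are not controlled by the homogeneity of $U$ and $A$, however large $\dim A$ is; passing to dimension $3$ does not help. For $q=2$ the issue disappears: the line $\{p,\langle v\rangle,\langle v+p'\rangle\}$ with $v\in X$ gives $c_p\in\{-c',1-c'\}$ at once.

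\textbf{The missing idea} is to make all translates $Y+\alpha u+p'$, $\alpha\in\FF_q^\times$, homogeneous simultaneously, for one linear subspace $Y\subseteq U$ and one vector $u$. The paper does this by applying the affine Ramsey theorem once per $\alpha$:
\begin{itemize}
\item Suppose $\hat Y,\hat u$ work for the values of $\alpha$ handled so far, and $\alpha_*$ is the next one.
\item Colour $\hat y\mapsto c_{\langle \hat y+\alpha_*\hat u+p'\rangle}$ on $\hat Y$ and extract a homogeneous affine subspace $Y+z\subseteq\hat Y$.
\item Recentre by setting $u=\alpha_*^{-1}z+\hat u$. Then $Y+\alpha_*u=Y+z+\alpha_*\hat u$ is homogeneous, while $Y+\alpha u\subseteq\hat Y+\alpha\hat u$ stays homogeneous for the earlier $\alpha$.
\end{itemize}
A final application inside $Y$ gives an affine $Y'+t\subseteq Y$ with $Y'+t+p'$ homogeneous. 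All resulting colours $c(\alpha)$, $\alpha\in\FF_q$, lie in $\{0,\frac1q\}$ by your Step 3 argument.

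The proof then finishes with two lines:
\begin{itemize}
\item The line through $\langle u\rangle$ and $\langle t+p'\rangle$ consists of $\langle u\rangle$ and $\langle t+\alpha u+p'\rangle$ for every $\alpha\in\FF_q$. So $\sum_\alpha c(\alpha)\in\{0,1\}$, which forces all $c(\alpha)$ to equal a common $d$.
\item The line $\ell$ through $\langle t+u\rangle$ and $p$ has remaining points $\langle\alpha t+\alpha u+p'\rangle$, $\alpha\neq0$. Each has coefficient $c(\alpha)=d$ because $\alpha t\in Y$. Hence $c_p=f(\ell)-(q-1)d\in\{-\frac{q-1}{q},0,\frac1q,1\}$.
\end{itemize}

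Your heuristic that the four values come from $d\in\{0,\frac1q\}$ and $f(\ell)\in\{0,1\}$ is correct. But it needs the $q-1$ coefficients $c_{\langle\beta v+p'\rangle}$ on a single line through $p$ to coincide, and one monochromatic affine subspace cannot provide that.
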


The proof proceeds in four steps:
\begin{enumerate}
\item By considering the restrictions of $f$ to planes (three-dimensional subspaces), we show that the coefficients $c_p$ are \emph{quantized}, that is, belong to some finite set which depends only on $q$.
\item Using the Ramsey theorem for vector spaces, we find a large ``reference'' subspace $U$ which is homogeneous: the coefficients of all $p \in U$ are the same. The common coefficient must be either $0$ or $\frac{1}{q+1}$. 
\item For every point $p \notin U$, we use repeated applications of the Ramsey theorem for affine vector spaces to find a subspace of $U$ which is ``homogeneous with respect to $p$''.
\item We deduce that $c_p$ has one of the claimed values.
\end{enumerate}
The most difficult step is the third one.


\bigskip

We start by showing that the coefficients $c_p$ are quantized.

\begin{lemma} \label{lem:quantization}
There exists a finite set $\Cset(q)$ such that if $n \ge 3$ then $c_p \in \Cset(q)$ for all $p$.
\end{lemma}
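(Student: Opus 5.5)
The plan is to restrict $f$ to a single plane and invert the point--line incidence relation there. Given a point $p$, since $n \ge 3$ we can pick a plane (three-dimensional subspace) $P$ containing $p$. For every line $\ell \subseteq P$ we have
\[
 f(\ell) = \sum_{r \in \ell} c_r,
\]
because $x_r(\ell) = 1$ exactly when $r \in \ell$, and every such $r$ lies in $P$. So the values of $f$ on lines of $P$ depend only on the coefficients $c_r$ for $r \in P$. The points and lines of $P$ form the projective plane $PG(2,q)$, whose incidence matrix is invertible. We therefore expect to recover each $c_r$, $r \in P$, as an explicit linear combination of the $0/1$ values $f(\ell)$ whose coefficients depend only on $q$. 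Note that this uses only the given representation $f = \sum_p c_p x_p$; we never need uniqueness of the representation.

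\textbf{Step 1: sum over lines through $p$.} Put $S = \sum_{r \in P} c_r$. Each line of $P$ through $p$ contains $p$ together with $q$ further points. Every point $r \in P \setminus \{p\}$ lies on exactly one line through $p$. Hence
\[
 \sum_{\ell \ni p,\, \ell \subseteq P} f(\ell) = (q+1) c_p + \sum_{r \in P,\, r \ne p} c_r = q c_p + S .
\]

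\textbf{Step 2: sum over all lines of $P$.} Every point of $P$ lies on exactly $q+1$ lines of $P$. Therefore
\[
 \sum_{\ell \subseteq P} f(\ell) = (q+1) S .
\]

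\textbf{Step 3: solve for $c_p$.} Write $a$ for the number of lines of $P$ through $p$ with $f(\ell)=1$, and $b$ for the number of lines of $P$ with $f(\ell)=1$. Combining the two displays gives
\[
 c_p = \frac{1}{q}\left( a - \frac{b}{q+1} \right).
\]
Since $f$ is Boolean, $0 \le a \le q+1$ and $0 \le b \le q^2+q+1$. So $c_p$ lies in the finite set
\[
 \Cset(q) = \left\{ \frac{1}{q}\left(a - \frac{b}{q+1}\right) : 0 \le a \le q+1,\ 0 \le b \le q^2+q+1 \right\},
\]
which depends only on $q$.

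There is no serious obstacle. The only points needing care are the two incidence counts in $PG(2,q)$ and the observation that restricting to lines inside $P$ involves only the points of $P$.
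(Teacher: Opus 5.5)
Your proof is correct, and it takes a different route from the paper's. Both arguments restrict to a plane through $p$. The two double counts in $P$ give $\sum_{\ell \ni p} f(\ell) = qc_p + S$ and $\sum_{\ell \subseteq P} f(\ell) = (q+1)S$. Solving yields $c_p = \frac{1}{q}\bigl(a - \frac{b}{q+1}\bigr)$ with $a,b$ bounded integers.

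The paper argues less explicitly, in three steps:
\begin{itemize}
\item It first shows that every function on $J_q(3,2)$ has degree one. It does this by passing to the dual $J_q(3,1)$, writing $g = \sum_p g(p^\perp) y_p$, and using the formula from the introduction showing that $y_p$ has degree one.
\item It then uses $\sqbinom{3}{2}_q = \sqbinom{3}{1}_q$ to conclude that the $x_p$ form a basis, so the representation is unique.
\item It takes $\Cset(q)$ to be the set of coefficients occurring in the finitely many Boolean functions on $J_q(3,2)$. Restricting $f$ to a plane through $p$ and invoking uniqueness then gives $c_p \in \Cset(q)$.
\end{itemize}

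Your argument replaces the spanning argument, the dimension count and the uniqueness step with an explicit inversion of the point--line incidence structure of $PG(2,q)$. In effect you are using the identity $NN^{T} = qI + J$ for its incidence matrix $N$, where $J$ is the all-ones matrix. This buys you three things: uniqueness of the representation comes for free, you do not rely on the formula for $y_r$, and you get an explicit $\Cset(q)$ of size $O(q^3)$. That last point gives an explicit bound on the number of colours fed into the Ramsey arguments.

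The paper's approach needs no computation. It also produces the smallest admissible set, namely exactly the coefficients realised by Boolean functions on $J_q(3,2)$. By your formula, that set is contained in yours.
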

\begin{proof}
Every function on $J_q(3,2)$ has degree one. Indeed, if $g\colon J_q(3,2) \to \RR$ then $g^\perp\colon J_q(3,1) \to \RR$. Since $x_p$ is a delta function on $J_q(3,1)$,
\[
 g^\perp = \sum_{p \in J_q(3,1)} g^\perp(p) x_p,
\]
and so
\[
 g = \sum_{p \in J_q(3,1)} g(p^\perp) y_p.
\]

Every function $g\colon J_q(3,2) \to \RR$ can thus be written in the form
\[
 g = \sum_{p \in J_q(3,2)} d_p x_p.
\]
The space of real-valued functions on $J_q(3,2)$ has dimension $\sqbinom{3}{2}_q$ and is spanned by the $\sqbinom{3}{1}_q$ functions $x_p$. Since $\sqbinom{3}{2}_q = \sqbinom{3}{1}_q$, we conclude that the functions $x_p$ form a basis, and so the representation is unique.

There are finitely many Boolean functions on $J_q(3,2)$, and consequently, the coefficients $d_p$ occurring in such functions belong to some finite set $\Cset(q)$.

We conclude the proof by showing that $c_p \in \Cset(q)$ for all $p$. Every $p$ is contained in some plane $A$. The restriction of $f$ to $A$ can be written as
\[
 f|_A = \sum_{p \in A} c_p x_p.
\]
Since $f|_A$ is Boolean and the set of functions on $A$ have the same structure as the set of functions on $J_q(3,2)$, we deduce that $c_p \in \Cset(q)$, as required.
\end{proof}

Next, we find a large homogeneous subspace.

\begin{lemma} \label{lem:homogeneous}
For every $m \ge 2$, provided $n \ge \Nhomo(q,m)$, there is an $m$-dimensional subspace $U \subseteq \FF_q^n$ such that $c_p = c$ for all $p \in U$, where $c \in \{0, \frac{1}{q+1}\}$.
\end{lemma}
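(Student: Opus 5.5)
Since by \Cref{lem:quantization} every coefficient $c_p$ lies in the finite set $\Cset(q)$, the map $p \mapsto c_p$ is a colouring of the points of $\FF_q^n$ with $|\Cset(q)|$ colours. The plan is to apply the Ramsey theorem for vector spaces (the Graham--Leeb--Rothschild theorem, in the form deduced from Hales--Jewett in \Cref{sec:ramsey}) to one-dimensional subspaces. This will give a subspace $U$ of dimension $m' \ge \max(m,2)$ all of whose points receive the same colour $c$. We then take $\Nhomo(q,m)$ to be the corresponding Ramsey number for colouring points with $|\Cset(q)|$ colours and seeking a monochromatic $m$-dimensional subspace. Any $m$-dimensional subspace of $U$ is again homogeneous, so it suffices to find $U$ of dimension exactly $m$, with $m \ge 2$.

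It remains to show that $c \in \{0, \frac{1}{q+1}\}$. Since $m \ge 2$, $U$ contains a line $\ell$, and every point of $\ell$ has coefficient $c$. Evaluating $f$ at the input $\ell$ gives
\[
 f(\ell) = \sum_{p \subseteq \ell} c_p = (q+1)c,
\]
because a line contains exactly $\sqbinom{2}{1}_q = q+1$ points. As $f$ is Boolean, $(q+1)c \in \{0,1\}$, so $c \in \{0, \frac{1}{q+1}\}$. Only the points lying on the input contribute to the sum, so this computation uses just the homogeneity of $U$.

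The only substantive step is invoking the Ramsey theorem for vector spaces in the right form, namely colouring $1$-dimensional subspaces and obtaining a monochromatic $m$-dimensional subspace. If the version in \Cref{sec:ramsey} is stated for affine spaces or for vectors rather than projective points, I would pass to projective points by colouring each nonzero vector with the colour of its span. A monochromatic linear subspace of vectors then yields a homogeneous projective subspace of the same dimension.
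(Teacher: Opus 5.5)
Your proof is correct and is essentially the paper's argument: colour points by their quantized coefficients, apply the Ramsey theorem for vector spaces (\Cref{pro:ramsey-projective}, which is already stated for points) to get a homogeneous $m$-dimensional $U$, and evaluate $f$ on a line $\ell \subseteq U$ to get $(q+1)c \in \{0,1\}$. Your fallback remark is unnecessary and slightly off as stated: applying an affine Ramsey theorem to the span-colouring only gives a homogeneous affine subspace, not a linear one, and closing that gap is exactly the extra work done in \Cref{lem:ramsey-projective-helper}.
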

\begin{proof}
In view of \Cref{lem:quantization} and for large enough $n$, the Ramsey theorem for vector spaces~\cite{GLR72,Spencer79} implies the existence of a homogeneous $m$-dimensional subspace $U \subseteq \FF_q^n$, say all coefficients are equal to $c$. Since $m \ge 2$, the subspace $U$ contains a line $\ell$. Since
\[
 f(\ell) = \sum_{p \in \ell} c_p = \sqbinom{2}{1}_q c = (q + 1) c,
\]
we conclude that either $c = 0$ (if $f(\ell) = 0$) or $c = \frac{1}{q+1}$ (if $f(\ell) = 1$).
\end{proof}

If $p \in U$ then $c_p = c$. Otherwise, we find a subspace of $U$ which is ``homogeneous with respect to $p$''. We color each vector $u \in U$ by $c_{\langle u + p' \rangle}$, where $p'$ is an arbitrary non-zero vector in $p$. We color vectors $u \in U$ rather than points $\langle u \rangle$ since $c_{\langle \alpha u + p' \rangle}$ could depend on $\alpha$. The relevant Ramsey theorem is then the Ramsey theorem for affine vector spaces~\cite{GLR72}, which states that some large affine subspace $X \subseteq U$ is homogeneous: all vectors in $X$ have the same color $d(X)$.

We can write $X = W + u$ for some $u \in U$, possibly $u = 0$. If $w \in W$ is non-zero then $X$ contains $\beta w + u$ for all $\beta \in \mathbb{F}_q$, and so the coefficients of the points $\langle \beta w + u + p' \rangle$ are all colored by $c(X)$. Together with $\langle w \rangle \in U$ they form a line, and so $qc(X) + c \in \{0, 1\}$, restricting the possible values of $c(X)$.

If $X$ were a subspace, then we could consider the line connecting $p$ and $\langle w \rangle$. This line contains the additional points $\langle \beta w + p' \rangle$ for $\beta \ne 0$, and so we obtain a formula for $c_p$ in terms of $c(X)$. However, we are not guaranteed that $X$ is a subspace. Instead, we use repeated applications of the Ramsey theorem to find $W,u$ such that the subspaces $W + \alpha u + p'$ are homogeneous for all non-zero $\alpha$.

\begin{lemma} \label{lem:ramsey-induction}
For every $r \ge 1$ and $\Alpha \subseteq \FF_q^\times$, the following holds for the subspace $U$ constructed in \Cref{lem:homogeneous}, provided $m \ge \Nramsey(q,r,\Alpha)$.

For every point $\langle p' \rangle \notin U$ there exist an $r$-dimensional subspace $W \subseteq U$ and a vector $u \in U \setminus W$ such that $W + \alpha u + p'$ is homogeneous for all $\alpha \in \Alpha$: there exists $c(\alpha) \in \{\frac{-c}{q}, \frac{1-c}{q} \}$ such that $c_{\langle w + \alpha u + p' \rangle} = c(\alpha)$ for all $w \in W$.
\end{lemma}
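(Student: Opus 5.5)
Since $\Alpha$ is a parameter in the statement, I will argue by induction on $|\Alpha|$, applying the affine Ramsey theorem of~\cite{GLR72} once per element of $\Alpha$.

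\textbf{Base case $\Alpha = \emptyset$.} Only an $r$-dimensional subspace $W \subseteq U$ and a vector $u \in U \setminus W$ are needed. Any $m \ge r+1$ suffices.

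\textbf{Inductive step.} Write $\Alpha = \Alpha' \cup \{\alpha_0\}$ with $\alpha_0 \notin \Alpha'$, and let $r' = \Nramaff(q, r+1, |\Cset(q)|)$. This is the dimension that forces, for any coloring of an $r'$-dimensional space by $|\Cset(q)|$ colors, a monochromatic affine subspace of dimension $r+1$.

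\begin{enumerate}
\item Apply the induction hypothesis with parameters $r'$ and $\Alpha'$, assuming $m \ge \Nramsey(q,r',\Alpha')$. This yields an $r'$-dimensional $W' \subseteq U$ and a vector $u' \in U \setminus W'$ such that $W' + \alpha u' + p'$ is homogeneous for every $\alpha \in \Alpha'$.
\item Color each $w \in W'$ by $c_{\langle w + \alpha_0 u' + p'\rangle} \in \Cset(q)$. By the affine Ramsey theorem there is a monochromatic affine subspace $Y = W'' + v$ with $\dim W'' = r+1$ and $v \in W'$.
\item Pick $W \subseteq W''$ of dimension $r$ and set $u = u' + \alpha_0^{-1} v$. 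Then $u \in U$, and $u \notin W$ because $v \in W'$, $W \subseteq W'$, and $u' \notin W'$.
\item Check $\alpha_0$: we have $W + \alpha_0 u + p' = W + v + \alpha_0 u' + p' \subseteq Y + \alpha_0 u' + p'$, which is homogeneous.
\item Check $\alpha \in \Alpha'$: we have $W + \alpha u + p' = W + \alpha\alpha_0^{-1} v + \alpha u' + p'$. Since $W + \alpha\alpha_0^{-1}v \subseteq W'$, this set lies in $W' + \alpha u' + p'$, which is homogeneous by step~1.
\end{enumerate}

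This recursion defines $\Nramsey(q,r,\Alpha)$. The extra dimension in $W''$ is not strictly needed, but it does no harm.

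\textbf{Identifying the common value $c(\alpha)$.} Pick a nonzero $w \in W$. The points $\langle \beta w + \alpha u + p'\rangle$ for $\beta \in \FF_q$ are $q$ distinct points, all with coefficient $c(\alpha)$. Together with $\langle w\rangle \in U$, whose coefficient is $c$, they are the $q+1$ points of the line spanned by $w$ and $\alpha u + p'$. This line is indeed two-dimensional, because $p' \notin U$ gives $\alpha u + p' \notin U$. Since $f$ is Boolean on lines,
\[
 q\,c(\alpha) + c \in \{0,1\},
\]
so $c(\alpha) \in \{\tfrac{-c}{q}, \tfrac{1-c}{q}\}$.

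\textbf{Expected obstacle.} The main difficulty is step~3: the translate $v$ produced by Ramsey must be absorbed into $u$ without destroying the homogeneity already obtained for the other $\alpha \in \Alpha'$. Rescaling by $\alpha_0^{-1}$ handles this, since for each other $\alpha$ the shift $\alpha\alpha_0^{-1}v$ stays inside $W'$. This is exactly why the induction hypothesis must be applied with a large $r'$ rather than with $r$.
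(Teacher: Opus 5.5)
Your proof is correct and follows the paper's argument essentially step for step. Like the paper, you induct on $\Alpha$, apply affine Ramsey to the coloring $w \mapsto c_{\langle w + \alpha_0 u' + p' \rangle}$ on the large subspace from the induction hypothesis, absorb the translate via $u = u' + \alpha_0^{-1} v$, and pin down $c(\alpha)$ with the same line argument. The only deviations are cosmetic: you extract an $(r+1)$-dimensional monochromatic affine subspace where dimension $r$ suffices, and your $\Nramaff(q, r+1, |\Cset(q)|)$ swaps the last two arguments relative to the paper's $\Nramaff(q,c,r)$.
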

\begin{proof}
The proof is by induction on $\Alpha$.

\paragraph{Base case} Suppose that $\Alpha = \emptyset$. In this case, we just need to find an $r$-dimensional subspace $W \subseteq U$ and some $u \in U \setminus W$. We choose $m = r + 1$, let $u$ be an arbitrary non-zero vector in $U$, and let $W$ be an arbitrary $r$-dimensional subspace of $U$ not containing $u$.

\paragraph{Induction step} Suppose that $\Alpha$ is non-empty, say $\alpha_* \in \Alpha$. We apply the induction hypothesis with an appropriate $\hat r$ and $\hat \Alpha = \Alpha \setminus \{\alpha_*\}$, obtaining an $\hat r$-dimensional subspace $\hat W \subseteq U$ and a vector $\hat u \in U \setminus \hat W$ such that $\hat W + \alpha \hat u + p'$ is homogeneous for all $\alpha \in \hat \Alpha$.

Color the vectors in $\hat W$ as follows: $\hat w \in \hat W$ gets the color $c_{\langle \hat w + \alpha_* \hat u + p' \rangle}$. In view of \Cref{lem:quantization} and for an appropriate choice of $\hat r$, the Ramsey theorem for affine vector spaces~\cite{GLR72} applied to $\hat W$ implies the existence of an $r$-dimensional affine subspace $W + z \subseteq \hat{W}$ such that $W + z + \alpha_* \hat u + p'$ is homogeneous. Note that $W \subseteq \hat{W}$ and $z \in \hat{W}$.

Let $u = \alpha_*^{-1} z + \hat u \in U$. Note that $u \notin W$, since otherwise $\hat{u} \in \hat{W}$. To complete the proof, we show that $W + \alpha u + p'$ is homogeneous for all $\alpha \in \Alpha$ and that the common value of all coefficients is in $\{0, \frac{1}{q}\}$.

By construction, $W + \alpha_* u + p' = W + z + \alpha_* \hat u + p'$ is homogeneous.

If $\alpha \in \hatAlpha$ then by assumption $\hat{W} + \alpha \hat{u} + p'$ is homogeneous, and so $W + \alpha u + p'$ is also homogeneous, since $W + \alpha u = W + \alpha \alpha_*^{-1} z + \alpha \hat u \subseteq \hat{W} + \alpha \hat u$.

For each $\alpha \in \Alpha$, choose any non-zero $w \in W$. Then $c_{\langle \beta w + \alpha u + p' \rangle} = c(\alpha)$ for all $\beta \in \mathbb{F}_q$ and some $c(\alpha) \in \Cset(q)$. The line $\ell$ connecting $\langle w \rangle$ and $\langle \alpha u + p' \rangle$ thus contains one point $\langle w \rangle$ whose coefficient is $c$, and $q$ points $\langle \beta w + \alpha u + p' \rangle$ whose coefficients are $c(\alpha)$.\footnote{To see that this is indeed a line we need to check that $w, \alpha u + p' \neq 0$ and that $\langle w \rangle \neq \langle \alpha u + p' \rangle$. We chose $w$ to be non-zero. We have $\alpha u + p' \neq 0$ since $\alpha u + p' \notin U$. Since $w \in U$, it follows that $\langle w \rangle \neq \langle \alpha u + p' \rangle$. In the sequel we omit such routine verification.} The sum of these coefficients is $f(\ell) \in \{0,1\}$, and so $c(\alpha) \in \{\frac{-c}{q}, \frac{1-c}{q}\}$.
\end{proof}

One more application of the Ramsey theorem for affine vector spaces allows us to extend \Cref{lem:ramsey-induction} to all $\alpha$, at the cost of having $W$ be an affine subspace. We will actually need a slightly stronger statement.

\begin{lemma} \label{lem:ramsey-conclusion}
For every $r \ge 1$, the following holds for the subspace $U$ constructed in \Cref{lem:homogeneous}, provided $m \ge \Nramseyc(q,r)$.

For every point $\langle p' \rangle$ there exist an $r$-dimensional affine subspace $W + t \subseteq U$ and a vector $u \neq 0$ such that
\begin{enumerate}[(a)]
\item $W + t + p'$ is homogeneous: there exists $c(0) \in \{\frac{-c}{q},\frac{1-c}{q}\}$ such that $c_{\langle w + t + p' \rangle} = c(0)$ for all $w \in W$.
\item $\langle W,t \rangle + \alpha u + p'$ is homogeneous for all $\alpha \neq 0$: there exists $c(\alpha) \in \{\frac{-c}{q},\frac{1-c}{q}\}$ such that $c_{\langle z + p' \rangle} = c(\alpha)$ for all $z \in \langle W,t \rangle$.
\end{enumerate}
\end{lemma}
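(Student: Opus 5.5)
The plan is to combine \Cref{lem:ramsey-induction}, applied with the full set $\Alpha = \FF_q^\times$, with one further application of the Ramsey theorem for affine vector spaces inside the subspace it produces. As in \Cref{lem:ramsey-induction}, I will assume $\langle p' \rangle \notin U$. This is needed so that $t + p' \neq 0$ for every $t \in U$, and it is the case of interest, since points of $U$ already have coefficient $c$.

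First, I choose $\hat r$ large enough in terms of $q$, $r$ and $|\Cset(q)|$, namely at least the affine Ramsey number guaranteeing an $r$-dimensional monochromatic affine subspace under $|\Cset(q)|$ colors. I then set $\Nramseyc(q,r) = \Nramsey(q,\hat r,\FF_q^\times)$. \Cref{lem:ramsey-induction} then yields an $\hat r$-dimensional subspace $\hat W \subseteq U$ and a vector $\hat u \in U \setminus \hat W$ such that $\hat W + \alpha \hat u + p'$ is homogeneous for every $\alpha \neq 0$, with common value $c(\alpha) \in \{\frac{-c}{q},\frac{1-c}{q}\}$. I take $u = \hat u$, which is non-zero because $\hat u \notin \hat W \ni 0$.

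Next, I color each $\hat w \in \hat W$ by $c_{\langle \hat w + p' \rangle}$. By \Cref{lem:quantization} this uses at most $|\Cset(q)|$ colors. The affine Ramsey theorem~\cite{GLR72} then gives an $r$-dimensional affine subspace $W + t \subseteq \hat W$ on which the color is constant, say $c(0)$; this is the homogeneity in part~(a). Part~(b) is then immediate: $W \subseteq \hat W$ and $t \in \hat W$, so $\langle W, t \rangle \subseteq \hat W$. Hence $\langle W,t\rangle + \alpha u + p' \subseteq \hat W + \alpha \hat u + p'$ is homogeneous with the value $c(\alpha)$ already known to lie in $\{\frac{-c}{q},\frac{1-c}{q}\}$.

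It remains to pin down $c(0)$, by the same line argument used at the end of the proof of \Cref{lem:ramsey-induction}. Pick a non-zero $w \in W$ and consider the line $\ell$ spanned by $\langle w \rangle$ and $\langle t + p' \rangle$; this is a line because $t + p' \notin U$ while $w \in U$. Its points are $\langle w \rangle$, with coefficient $c$, and the $q$ points $\langle \beta w + t + p' \rangle$ for $\beta \in \FF_q$, each with coefficient $c(0)$ since $\beta w + t \in W + t$. Thus $f(\ell) = c + q\,c(0) \in \{0,1\}$, giving $c(0) \in \{\frac{-c}{q},\frac{1-c}{q}\}$.

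I do not expect a serious obstacle. The only care needed is bookkeeping: choose $\hat r$ before invoking \Cref{lem:ramsey-induction}, and observe that the span $\langle W,t\rangle$, not just the affine subspace $W+t$, stays inside $\hat W$. This is exactly why part~(b) can be stated for the linear span.
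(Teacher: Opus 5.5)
Your proposal is correct and follows the paper's proof essentially step for step. You apply \Cref{lem:ramsey-induction} with $\Alpha = \FF_q^\times$ and a suitably large $\hat r$, then affine Ramsey on the coloring $\hat w \mapsto c_{\langle \hat w + p'\rangle}$ of $\hat W$, use $\langle W,t\rangle \subseteq \hat W$ for (b), and pin down $c(0)$ via the line through $\langle w\rangle$ and $\langle t+p'\rangle$; like the paper, you rightly restrict to $\langle p'\rangle \notin U$.
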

\begin{proof}
For a given point $\langle p' \rangle \notin U$, apply \Cref{lem:ramsey-induction} with an appropriate $\hat r$ and $\Alpha = \FF_q^\times$ to obtain an $\hat r$-dimensional subspace $\hat W \subseteq U$ and a vector $u \in U \setminus \hat W$ such that $\hat W + \alpha u + p'$ is homogeneous for all $\alpha \neq 0$, with common value $c(\alpha) \in \{\frac{-c}{q}, \frac{1-c}{q}\}$. In particular, $u \neq 0$.

Color each vector $\hat w \in \hat W$ with $c_{\langle \hat w + p' \rangle}$. In view of \Cref{lem:quantization} and for an appropriate choice of $\hat r$, the Ramsey theorem for affine vector spaces~\cite{GLR72} applied to $\hat W$ implies the existence of an $r$-dimensional affine subspace $W + t \subseteq \hat W$ such that $W + t + p'$ is homogeneous, with some common value $c(0)$. Since $\langle W,t \rangle \subseteq \hat W$, we see that $\langle W,t \rangle + \alpha u + p'$ is homogeneous for all $\alpha \neq 0$, with some common value $c(\alpha) \in \{\frac{-c}{q}, \frac{1-c}{q}\}$.

Since $r \ge 1$, we can find a point $\langle s \rangle \in W$. The line $\ell$ connecting $\langle s \rangle$ and $\langle t + p' \rangle$ thus contains one point $\langle s \rangle$ whose coefficient is $c$, and $q$ points $\langle \beta s + t + p' \rangle$ whose common value is $c(0)$. The sum of these coefficients is $f(\ell) \in \{0,1\}$, and so $c(0) \in \{\frac{-c}{q}, \frac{1-c}{q}\}$.
\end{proof}

At this point we can conclude the proof of \Cref{pro:step1}.

\begin{proof}[Proof of \Cref{pro:step1}]
Apply \Cref{lem:ramsey-conclusion} with $r := 1$ in conjunction with \Cref{lem:homogeneous} to deduce that for $\Nlist(q) := \Nramseyc(q,1)$, the following holds:
\begin{enumerate}[(a)]
\item There exists a subspace $U$ such that $c_p = c$ for all $p \in U$, where $c \in \{0, \frac{1}{q+1}\}$.
\item For every $\langle p' \rangle \notin U$ there exists a one-dimensional affine subspace $\langle s \rangle + t$, a vector $u \neq 0$, and a function $c\colon \FF_q \to \{\frac{-c}{q}, \frac{1-c}{q}\}$ such that $c_{\langle x + p' \rangle} = c(0)$ for all $x \in \langle s \rangle + t$ and $c_{\langle y + \alpha u + p' \rangle} = c(\alpha)$ for all $y \in \langle s,t \rangle$ and $\alpha \neq 0$.
\end{enumerate}

We will show that if $c = 0$ then $c_p \in \{-\frac{q-1}{q},0,\frac{1}{q},1\}$ for all points $p$. If $c = \frac{1}{q+1}$ then we observe that
\[
 1 - f = \sum_p \left(\frac{1}{q+1} - c_p\right) x_p,
\]
and so the same will hold for $1 - f$, since the $c$ corresponding to $1 - f$ (for the same subspace $U$) is $0$.

\smallskip

If $p \in U$ then $c_p = 0$, so suppose that $p \notin U$. Choose some vector $p'$ such that $p = \langle p' \rangle$.

Consider the line $\ell_1$ connecting the points $\langle u \rangle$ and $\langle t+p' \rangle$. This line contains the point $\langle u \rangle \in U$ and the points $\langle t+\alpha u + p' \rangle$ for all $\alpha$. The corresponding coefficients are $c = 0$ and $c(\alpha)$. Since $f(\ell_1) \in \{0, 1\}$, it follows that $\sum_{\alpha \in \FF_q} c(\alpha) \in \{0, 1\}$.

Since $c(\alpha) \in \{0,\frac 1q\}$ for all $\alpha$, this leaves two possibilities: $c(\alpha) = 0$ for all $\alpha$, or $c(\alpha) = \frac1q$ for all $\alpha$. Denote this common value by $d \in \{0, \frac1q\}$.

Consider next the line $\ell_2$ connecting the points $\langle t+u \rangle$ and $\langle p' \rangle$. This line contains the points $\langle p' \rangle$, $\langle t+u \rangle \in U$, and $\langle \alpha t + \alpha u + p' \rangle$ for all $\alpha \neq 0$. The corresponding coefficients are $c_p$, $c = 0$, and $c(\alpha)$ for $\alpha \neq 0$. Since $f(\ell_2) \in \{0,1\}$, it follows that
\[
 c_p = f(\ell_2) - \sum_{\alpha \neq 0} c(\alpha) = f(\ell_2) - (q-1)d \in \{0,1\} - \{0,\tfrac{q-1}{q}\} = \{-\tfrac{q-1}{q},0,\tfrac{1}{q},1\}. \qedhere
\]
\end{proof}

\section{Main theorem for lines}
\label{sec:lines}

In this section, we show that functions satisfying the conclusion of \Cref{pro:step1} are trivial.

\begin{proposition} \label{pro:step2}
Suppose that $n \ge 3$ and that $f\colon J_q(n,2) \to \RR$ is Boolean and has the form
\[
 f = \sum_p c_p x_p, \text{ where } c_p \in \left\{-\frac{q-1}{q}, 0, \frac{1}{q}, 1 \right\} \text{ for all } p.
\]

Then $f$ has one of the following forms:
\[
 0, x_p, 1 - y_r, 1 - x_p - y_r,
\]
where in the last case $p \not\perp r$.
\end{proposition}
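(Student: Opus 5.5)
The plan is to classify the coefficient patterns on a single line, and then read the global structure off that list. Write $P_1$, $P_-$, $P_h$, $P_0$ for the sets of points whose coefficient is $1$, $-\frac{q-1}{q}$, $\frac1q$, $0$ respectively, and set $H = P_- \cup P_h$.

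\textbf{Step 1: line types.} For a line $\ell$, let $a,b,h$ be the numbers of points of $\ell$ in $P_1,P_-,P_h$. Then $a+b+h \le q+1$, and $f(\ell)\in\{0,1\}$ reads
\[
 qa-(q-1)b+h \in \{0,q\}.
\]
Reducing modulo $q$ gives $b+h \equiv 0 \pmod q$. Since $0 \le b+h \le q+1$ and $q \ge 2$, this forces $b+h\in\{0,q\}$.
\begin{itemize}
\item If $b+h=0$, the condition becomes $qa \in \{0,q\}$, so $a\in\{0,1\}$.
\item If $b+h=q$, the condition becomes $q(a-b+1)\in\{0,q\}$, so $a-b\in\{-1,0\}$. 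Since $a \le q+1-(b+h) = 1$, the only possibilities are $(a,b,h) = (0,0,q)$, $(0,1,q-1)$ and $(1,1,q-1)$.
\end{itemize}
Two consequences follow. Every line meets $H$ in $0$ or $q$ points, and every line contains at most one point of $P_1$ and at most one point of $P_-$.

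\textbf{Step 2: the complement of $H$.} Let $T$ be the complement of $H$. By Step~1, $T$ meets every line in $1$ or $q+1$ points. In particular, the line through two points of $T$ lies entirely in $T$, so $T$ is (the set of points of) a subspace. Since $T$ meets every line, this subspace has codimension at most $1$. Hence either $T$ is everything, or $T=r^\perp$ is a hyperplane.

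\textbf{Step 3: the two cases.}
\begin{itemize}
\item If $T$ is everything, then $H=\emptyset$ and all coefficients lie in $\{0,1\}$. Two points of $P_1$ would span a line with $a=2$, which Step~1 forbids. So $f=0$ or $f=x_p$.
\item If $T=r^\perp$, then $P_h\cup P_-$ is exactly the set of points not perpendicular to $r$.
 \begin{itemize}
 \item $|P_-|\le 1$: the line through two points of $P_-$ would have $b=2$.
 \item $P_1 = \emptyset$: suppose $p\in P_1$. Then $p \in T$. Every line through $p$ not contained in $T$ has $a=1$ and $b+h=q$, so by Step~1 it is of type $(1,1,q-1)$ and contains a point of $P_-$. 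These points are distinct for distinct lines. Since $n\ge3$, there are $q^{n-2}\ge2$ such lines, contradicting $|P_-|\le 1$.
 \end{itemize}
 So all coefficients on $r^\perp$ vanish, all coefficients off $r^\perp$ equal $\frac1q$ except possibly one point $p\notin r^\perp$ with coefficient $\frac1q-1$. Using $1-y_r=\frac1q\sum_{p\not\perp r}x_p$ (the formula from the introduction with $k=2$), this gives $f=1-y_r$ or $f=1-y_r-x_p$ with $p\not\perp r$.
\end{itemize}

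The only step needing care is the modular enumeration in Step~1, especially checking that $b+h=q+1$ is impossible and that $a\le1$ in the second case. The rest is the standard characterization of sets meeting every line in $1$ or $q+1$ points; this is where $n\ge3$ is used, together with the line count through $p$.
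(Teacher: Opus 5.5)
\textbf{There is a genuine gap: your Step~1 enumeration drops a case.} When $b+h=q$ you correctly derive $a-b\in\{-1,0\}$ and $a\le 1$. But $a=1$ allows both $b=1$ and $b=2$. The triple $(a,b,h)=(1,2,q-2)$ gives $q-2(q-1)+(q-2)=0\in\{0,q\}$. So a line with coefficients $1,-\frac{q-1}{q},-\frac{q-1}{q}$ and $q-2$ copies of $\frac1q$ is consistent with $f(\ell)\in\{0,1\}$. This is the last multiset in Lemma~\ref{lem:line-coeffs}, and the paper notes it can only be excluded by looking beyond a single line.

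Hence ``every line contains at most one point of $P_-$'' does not follow from the line analysis, and Step~3 becomes circular:
\begin{itemize}
\item You justify $|P_-|\le 1$ by saying a line through two points of $P_-$ would have $b=2$. That is allowed whenever the line also contains a point of $P_1$.
\item You then prove $P_1=\emptyset$ by contradicting $|P_-|\le 1$.
\end{itemize}

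\textbf{The fix, which is what the paper does, is to reverse the order and rule out $P_1$ first without any bound on $P_-$.} Suppose $p\in P_1$; it is the unique such point, since your observation $a\le 1$ on every line is correct. Take two distinct lines $\ell_1,\ell_2$ through $p$ not contained in $r^\perp$. Each has type $(1,1,q-1)$ or $(1,2,q-2)$, so each contains some $r_i\in P_-$. Since $\ell_1\cap\ell_2=\{p\}$, we get $r_1\ne r_2$. The line through $r_1,r_2$ has $b\ge 2$, so it must have type $(1,2,q-2)$ and therefore pass through $p$. But then it equals both $\ell_1$ and $\ell_2$, a contradiction.

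Once $P_1=\emptyset$, only the types with $a=0$ remain, so $b\le 1$ on every line and $|P_-|\le 1$ follows. Your Step~2, the case where $T$ is everything, and the final identification of $f$ are fine and match the paper.
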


The first step is determining the possible sets of coefficients along a line.

\begin{lemma} \label{lem:line-coeffs}
For any line $\ell$ in $J_q(n,2)$, the $q+1$ coefficients $c_p$ for $p \in \ell$ are equal to one of the following multisets:
\[
 \{0^{q+1}\}, \{1,0^q\}, \{0,\tfrac{1}{q}^q\}, \{0,-\tfrac{q-1}{q},\tfrac{1}{q}^{q-1}\}, \{1,-\tfrac{q-1}{q},\tfrac{1}{q}^{q-1}\}, \{1,-\tfrac{q-1}{q}^2,\tfrac{1}{q}^{q-2}\}.
\]

In particular, if $Z = \{0,1\}$ and $Q = \{-\frac{q-1}{q},\frac{1}{q}\}$, then either all coefficients belong to $Z$, or exactly one coefficient belongs to $Z$.
\end{lemma}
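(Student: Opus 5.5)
The plan is a direct counting argument on a single line $\ell$. Since $f$ is Boolean and $f(\ell)=\sum_{p\in\ell}c_p$, the $q+1$ coefficients on $\ell$ sum to $0$ or $1$. Let $a$ be the number of coefficients equal to $1$, $b$ the number equal to $-\frac{q-1}{q}$, $e$ the number equal to $\frac1q$, and $z$ the number equal to $0$. Then $a+b+e+z=q+1$ and
\[
 a - \tfrac{q-1}{q}b + \tfrac1q e \in \{0,1\}.
\]
Multiplying by $q$ gives $qa - (q-1)b + e \in \{0,q\}$. The proof then consists of enumerating the nonnegative integer solutions of this system.

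We first reduce modulo $q$. This gives $b+e \equiv 0 \pmod q$. Since $b+e \le q+1$, we get $b+e\in\{0,q\}$, with the single exception $q=1$, which does not occur. So there are two cases.

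\emph{Case $b+e=0$.} All coefficients lie in $Z$, and the constraint becomes $qa\in\{0,q\}$, so $a\in\{0,1\}$. This yields $\{0^{q+1}\}$ and $\{1,0^q\}$.

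\emph{Case $b+e=q$.} Here exactly one coefficient lies in $Z$, so $a+z=1$. Substituting $e=q-b$ gives $qa - (q-1)b + q - b = q(a+1-b) \in \{0,q\}$, so $b\in\{a,a+1\}$.
\begin{itemize}
\item If $a=0$ and $z=1$, then $b\in\{0,1\}$, giving $\{0,\frac1q^q\}$ and $\{0,-\frac{q-1}{q},\frac1q^{q-1}\}$.
\item If $a=1$ and $z=0$, then $b\in\{1,2\}$, giving $\{1,-\frac{q-1}{q},\frac1q^{q-1}\}$ and $\{1,-\frac{q-1}{q}^2,\frac1q^{q-2}\}$. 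The last multiset requires $q\ge2$, which always holds.
\end{itemize}

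These six multisets are exactly the list in the statement. The ``in particular'' clause follows from the two cases: when $b+e=0$ all coefficients lie in $Z$, and when $b+e=q$ exactly one does.

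No step is a real obstacle. The only point needing care is the congruence step, where one must use $b+e\le q+1<2q$ to rule out $b+e=2q$.
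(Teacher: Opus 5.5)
Your proposal is correct and is essentially the paper's proof with the variables renamed. You clear denominators to get $qa-(q-1)b+e\in\{0,q\}$, reduce modulo $q$ to force $b+e\in\{0,q\}$, and in the case $b+e=q$ substitute $e=q-b$ to get $q(a+1-b)\in\{0,q\}$ before splitting on $a\in\{0,1\}$; your explicit use of $b+e\le q+1<2q$ in the congruence step just spells out a point the paper leaves implicit.
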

\begin{proof}
Suppose that the multiset is $\{-\frac{q-1}{q}^a,\frac{1}{q}^b,1^c,0^d\}$, where $a+b+c+d = q+1$. Since $f(\ell) = \sum_{p \in \ell} c_p$, we have
\[
 -(q-1)a + b + qc \in \{0,q\}.
\]

Considering this modulo $q$, we see that $a + b \equiv 0 \pmod{q}$, and so either $a = b = 0$ or $a + b = q$.

If $a = b = 0$ then we have $qc \in \{0,q\}$. Consequently $c \in \{0,1\}$, and we obtain $\{0^{q+1}\}$ or $\{1,0^q\}$.

If $a + b = q$ then $c \in \{0,1\}$. Substituting $b = q - a$ gives
\[
 q(1-a+c) \in \{0,q\}.
\]

If $c = 0$ then $a \in \{0,1\}$, and we obtain $\{0,\frac{1}{q}^q\}$ or $\{0,-\frac{q-1}{q},\frac{1}{q}^{q-1}\}$.

If $c = 1$ then $a \in \{1,2\}$, and we obtain $\{1,-\tfrac{q-1}{q},\tfrac{1}{q}^{q-1}\}$ or $\{1,-\tfrac{q-1}{q}^2,\tfrac{1}{q}^{q-2}\}$.
\end{proof}

In the list provided by the \namecref{lem:line-coeffs}, the latter two possibilities cannot actually occur, but we can only rule them out by considering planes.

The key idea in the proof of \Cref{pro:step2} is the observation that the set of points whose color lies in $Z$ is a subspace, since a line contains either $1$ or $q+1$ coefficients from $Z$. Since this subspace intersects every line, it must have codimension at most~$1$. The rest of the proof is a short case analysis.

\begin{proof}[Proof of \Cref{pro:step2}]
Let $V = \{ p : c_p \in Z \}$. We claim that $V$ is a (projective) subspace.\footnote{Equivalently, $\bigcup_{c_p \in Z} p$ is a linear subspace.} Indeed, if $p_1,p_2 \in V$ then applying \Cref{lem:line-coeffs}, we see that all coefficients in the line connecting $p_1$ and $p_2$ belong to $V$. Moreover, \Cref{lem:line-coeffs} makes clear that $V$ is non-empty.

\Cref{lem:line-coeffs} implies that every line must intersect $V$, and so $\codim(V) \leq 1$.

The easier case is when $\codim(V) = 0$, that is, all coefficients belong to $Z$. If $c_{p_1} = c_{p_2} = 1$ for two different points, then applying \Cref{lem:line-coeffs} to the line connecting them, we obtain a contradiction. Hence at most one coefficient is equal to $1$. If all coefficients are equal to $0$ then $f = 0$. If only $c_p = 1$, then $f = x_p$.

The more complicated case is when $\codim(V) = 1$. We claim that no coefficient can equal $1$. Indeed, suppose that $c_p = 1$; the argument of the preceding paragraph shows that this is the unique point whose coefficient is $1$. There are $\sqbinom{n-1}{1}_q - \sqbinom{n-2}{1}_q = q^{n-2} \ge 2$ lines through $p$ which are not contained in $V$. In particular, we can find two different such lines $\ell_1,\ell_2$.

Each line $\ell_i$ contains one point whose coefficient is $c_p = 1$ and one point whose coefficient is in $Q$, hence \Cref{lem:line-coeffs} shows that some point $r_i \in \ell_i$ has coefficient $c_{r_i} = -\frac{q-1}{q}$. Since $\ell_1,\ell_2$ intersect only at $p$, the two points $r_1,r_2$ are different. The line $\ell_3$ connecting them has two coefficients equal to $-\frac{q-1}{q}$, hence according to \Cref{lem:line-coeffs}, it must contain a point whose coefficient is $1$, which must be $p$. However, in that case $\ell_3$ contains two points of $\ell_1$ and two points of $\ell_2$, hence it must coincide with both of them, contradicting $\ell_1 \neq \ell_2$.

\Cref{lem:line-coeffs} thus implies that the multiset of coefficients in each line is one of the following:
\[
 \{0^{q+1}\}, \{0,\tfrac{1}{q}^q\}, \{0,-\tfrac{q-1}{q},\tfrac{1}{q}^{q-1}\}.
\]
In particular, at most one point $p$ has coefficient $c_p = -\frac{q-1}{q}$, since if there were two such points, then we would obtain a contradiction by considering the line connecting them.

Since $\codim(V) = 1$, we can write $V = r^\perp$ for some point $r$. If no point has coefficient $-\frac{q-1}{q}$ then
\[
 f = \frac{1}{q} \sum_{p \not\perp r} x_p = 1 - y_r.
\]

If $c_{p_0} = -\frac{q-1}{q}=\frac1q-1$, where $p_0 \not\perp r$, then
\[
 f = \frac{1}{q} \sum_{p \not\perp r} x_p - x_{p_0} = 1 - x_{p_0} - y_r. \qedhere
\]
\end{proof}

\section{Restriction}
\label{sec:restriction}

In this section we complete the proof of \Cref{thm:main} by proving the following result, where we remind the reader that $J_q(a;b)$ stands for $J_q(a+b,a)$.

\begin{proposition} \label{pro:step3}
If $J_q(a;b)$ is DOT and $a,b \ge 2$ then $J_q(a';b)$ is DOT for all $a' \ge a$.    
\end{proposition}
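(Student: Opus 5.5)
The plan is to reduce to the case $a' = a+1$ by induction on $a'$, and then to glue together restrictions to hyperplanes. Write $n = a+b+1$, so the inputs of $J_q(a+1;b)$ are $b$-dimensional subspaces of $\FF_q^{n}$ (so $k = b$). Let $f = \sum_p c_p x_p$ be Boolean of degree one there. For every hyperplane $H$, the $b$-dimensional subspaces of $H$ form a copy of $J_q(a;b)$. The restriction $f|_H = \sum_{p \in H} c_p x_p$ is still Boolean and of degree one, because $x_p$ vanishes on these inputs when $p \notin H$. By hypothesis $f|_H$ is therefore trivial.

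I will describe the trivial functions on $H$ intrinsically. The function $x_p$ with $p \in H$ is the indicator of $V \ni p$. The function $y_r$ computed inside $H$ is the indicator of $V \subseteq K \cap H$, where $K$ is a hyperplane of $\FF_q^n$ not containing $H$. The compatibility condition ``$p \not\perp r$'' becomes $p \notin K$. So for each $H$ I get a \emph{type}, one of the eight forms in \Cref{def:trivial}, together with parameters $p_H$ and/or $K_H \cap H$.

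The first step is to show these data are uniquely determined by $f|_H$. Since $a,b \ge 2$, the functions $0,1,x_p,y_r,\dots$ on $J_q(a;b)$ are pairwise distinct as functions. I can check this by counting: compare their averages, and compare which inputs they vanish on. The second step is to compare two hyperplanes $H, H'$ through their intersection $H \cap H'$. This is a copy of $J_q(a-1;b)$ with $a - 1 \ge 1$. Restricting the trivial descriptions of $f|_H$ and of $f|_{H'}$ to it must give the same function.

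Here lies the main obstacle. When $a-1 = 1$, $J_q(1;b)$ is dual to the points of a $(b+1)$-space, so on it every function has degree one and distinct descriptions can coincide. For instance, $x_p$ with $p \notin H \cap H'$ restricts to $0$, and $y$-type functions can degenerate to a single input. To avoid relying on one intersection, I would instead use the many hyperplanes through a fixed codimension-$2$ subspace, or through a fixed point. I would argue in this order:
\begin{enumerate}[(i)]
\item The type, up to the swap $f \leftrightarrow 1-f$ and up to which of $x$, $y$ appear, is constant over all $H$. Adjacent hyperplanes share many inputs, and the average value of $f$ separates the types.
\item If $x$-terms appear, the point $p_H$ is the same point $p$ for every $H$ containing it, and no hyperplane avoiding $p$ carries an $x$-term. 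To see this, pick inputs $V \ni p_H$ inside $H \cap H'$, which exist since $\dim(H\cap H') = a+b-1 > b$.
\item If $y$-terms appear, the subspaces $K_H \cap H$ are all traces of a single hyperplane $K$ of $\FF_q^n$. This uses the fact that the inputs $V \subseteq K_H \cap H \cap H'$ determine $K_H \cap H \cap H'$ whenever this space has dimension $\ge b$.
\end{enumerate}

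Having found a global $p$ and/or $K$ (with $p \notin K$ in the mixed case), let $g$ be the corresponding trivial function on $J_q(a+1;b)$. Here $y_r$ corresponds to $r = K^\perp$, and by the duality remark in the introduction $y_r$ has degree one. Every $b$-dimensional $V$ lies in some hyperplane $H$, and $f|_H = g|_H$ by construction, so $f = g$. Hence $f$ is trivial, and induction on $a'$ finishes the proof. I expect the bulk of the work to be the bookkeeping in (i)--(iii). In particular, the degenerate configurations must be handled: $p \notin H$, $H \subseteq K$, and the small intersection dimension when $a = 2$.
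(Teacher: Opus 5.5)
Your skeleton is the paper's. You restrict to codimension-one subspaces, apply the DOT hypothesis there, compare two restrictions on their intersection, and glue. Reducing first to $a'=a+1$ is a nice touch: any two distinct hyperplanes meet in codimension one, so most of the paper's connectivity arguments for $G_q(n',n)$, $G_p$ and $G_{r^-}$ disappear.

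\textbf{The gap is step (i).} You never say which forms share a ``type''. The natural reading, $g$ versus $1-g$ for $g\in\{0,x_p,y_r,x_p+y_r\}$, is \emph{not} invariant. The reason is that $x_p$ degenerates to $0$ off $p$, whereas $y_r$ degenerates to the constant $1$ on subspaces orthogonal to $r$. For instance, $f=1-y_r$ has $f|_{r^\perp}=0$, but $f|_H=1-y_r|_H$ for every other hyperplane $H$.

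The missing idea is the paper's positive type $\{0,x_p,1-y_r,1-x_p-y_r\}$, set against its complements. This class is closed under restriction to a codimension-one subspace (the restriction table). Moreover, no member coincides with a complement as a function on $J_q(a-1;b)$. Hence comparing on $H\cap H'$ forces $f|_H$ and $f|_{H'}$ to have the same type.

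Your proposed mechanism cannot replace this. Averages over $H$ and over $H'$ are not tied together by their few common inputs. Nor do means separate the types: on $J_q(2b,b)$ both $x_p$ and $y_r$ have mean $\frac{1}{q^b+1}$.

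\textbf{The obstacle you expect at $a=2$ does not exist.} On $J_q(1;b)$ with $b\ge2$ (inputs are the hyperplanes of $\FF_q^{b+1}$), the eight trivial forms are still pairwise distinct, with uniquely determined parameters. Their supports have sizes $0,1,\sqbinom{b}{1}_q,\sqbinom{b}{1}_q+1$ and the complementary sizes. The only coincidence of sizes occurs for $q=2$: $x_p$ against $1-x_{p'}-y_{r'}$, and its complement. Evaluating at a hyperplane through $p$ and $p'$ rules it out. So a single intersection suffices for (i) and (ii), as in the paper, and no pencils of hyperplanes are needed.

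\textbf{What genuinely remains is (iii).} When $H\cap H'=K\cap H$, both restrictions to $H\cap H'$ are constant and carry no information. So $K$ must be assembled from traces, e.g.\ $K=(K_{H_0}\cap H_0)+(K_{H_1}\cap H_1)$ for a pair with $H_1\not\supseteq K_{H_0}\cap H_0$. It must then be verified on every other $H$, routing degenerate pairs through a third hyperplane.

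Your insistence on intrinsic data is exactly right here, since $f|_H$ determines only $r^\perp\cap H$ and not $r$. The paper's step ``if $\beta(f|_{V_0})=y_r^-$ then $\beta(f|_V)=y_r^-$ whenever $r\not\perp V$'' needs the same care, because it fails for a badly chosen representative $r$. For example, take $f=1-y_{e_5}$ on $J_q(5,2)$, $V_0=\langle e_1,e_2,e_3,e_5\rangle$ and $r=\langle e_4+e_5\rangle$. Then $\beta(f|_{V_0})=y_r^-$, yet on $V=\langle e_1,e_2,e_4,e_5\rangle$ one has $\beta(f|_V)\neq y_r^-$.
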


Before proving this \namecref{pro:step3}, let us show how it implies \Cref{thm:main}. The crucial observation is that the DOT property is preserved by duality.

\begin{lemma} \label{lem:DOT-duality}
If $J_q(a;b)$ is DOT then so is $J_q(b;a)$.
\end{lemma}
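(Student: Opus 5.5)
Note first that $J_q(a;b) = J_q(a+b,a)$ and $J_q(b;a) = J_q(a+b,b)$. These are exactly the two Grassmann schemes exchanged by the duality $V \mapsto V^\perp$ from the introduction, so the plan is to transport everything along that duality.

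Let $g\colon J_q(a+b,b) \to \RR$ be a Boolean degree one function. Its dual $f = g^\perp$ is defined on $J_q(a+b,a)$ by $f(V) = g(V^\perp)$. It is Boolean, since it takes the same values as $g$. It has degree one by the remark in the introduction: if $g = \sum_p c_p x_p$, then $g^\perp = \sum_p c_p x_p^\perp = \sum_p c_p y_p$. Each $y_p$ is an affine combination of the $x$'s, and the constant can be absorbed using $\sum_p x_p = \sqbinom{a}{1}_q$. Since $J_q(a;b)$ is DOT, $f$ is one of $0, 1, x_p, 1-x_p, y_r, 1-y_r, x_p+y_r, 1-x_p-y_r$, where in the last two $p \not\perp r$.

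Dualizing back gives $g = f^\perp$. The remaining step is to check that duality maps the list of trivial functions to itself. By definition $x_p^\perp = y_p$, and conversely $y_r^\perp = x_r$: the subspace $V^\perp$ is orthogonal to $r$ iff $r \in V$, using $(V^\perp)^\perp = V$. Duality is linear and fixes constants, so the list is permuted as follows:
\[
x_p \leftrightarrow y_p, \qquad 1-x_p \leftrightarrow 1-y_p, \qquad x_p+y_r \mapsto y_p + x_r, \qquad 1-x_p-y_r \mapsto 1-y_p-x_r.
\]
The non-conflict condition $p \not\perp r$ is symmetric, so $x_r + y_p$ with $r \not\perp p$ is again on the list. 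Hence $g$ is trivial, and $J_q(b;a)$ is DOT.

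No step here is a real obstacle. The only care needed is in confirming that $y_r$, as defined by the explicit formula in the introduction, coincides with the orthogonality indicator on $J_q(n,k)$ for every $k$, and that $(V^\perp)^\perp = V$ holds for the (possibly degenerate) standard bilinear form over $\FF_q$. Both follow from the nondegeneracy of the standard dot product on $\FF_q^n$, which gives $\dim V^\perp = n - \dim V$.
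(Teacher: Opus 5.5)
Your proposal is correct and follows the paper's own argument: dualize to $J_q(a;b)$, apply the DOT hypothesis there, and dualize back using $x_p^\perp = y_p$, $y_p^\perp = x_p$ and the symmetry of the condition $p \not\perp r$. One small wording slip: the standard dot product on $\FF_q^n$ is nondegenerate (it only has isotropic vectors, so $V \cap V^\perp$ can be nonzero), so ``possibly degenerate'' should be dropped, though your dimension argument for $(V^\perp)^\perp = V$ is right.
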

\begin{proof}
Suppose that $f\colon J_q(b;a) \to \RR$ is a degree one Boolean function. The function $f^\perp\colon J_q(a;b) \to \RR$ is also a Boolean degree one function. By assumption, $f^\perp$ is trivial. Since $x_p^\perp = y_p$ and $y_p^\perp = x_p$, it follows that $f$ is trivial as well.
\end{proof}

\begin{corollary} \label{cor:step3}
If $J_q(a;b)$ is DOT and $a,b \ge 2$ then $J_q(a';b')$ is DOT for all $a' \ge a$ and $b' \geq b$.
\end{corollary}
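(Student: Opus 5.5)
The plan is to combine \Cref{pro:step3} with the duality \Cref{lem:DOT-duality}, applying each once. We raise the first parameter from $a$ to $a'$ with $b$ fixed, swap the roles of the two parameters by duality, raise the other parameter from $b$ to $b'$, and swap back.

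Concretely, start from the assumption that $J_q(a;b)$ is DOT with $a,b \ge 2$, and let $a' \ge a$ and $b' \ge b$.

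\begin{enumerate}
\item Apply \Cref{pro:step3} directly to get that $J_q(a';b)$ is DOT. Note that $a' \ge a \ge 2$ and $b \ge 2$.
\item Apply \Cref{lem:DOT-duality} to conclude that $J_q(b;a')$ is DOT.
\item Apply \Cref{pro:step3} again, now with first parameter $b$ and second parameter $a'$. Both are at least $2$, so the hypothesis $a,b \ge 2$ of the proposition is met. This shows that $J_q(b';a')$ is DOT for every $b' \ge b$.
\item Apply \Cref{lem:DOT-duality} once more to obtain that $J_q(a';b')$ is DOT, as claimed.
\end{enumerate}

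There is no genuine obstacle here, since all the work is in \Cref{pro:step3}. The only point needing care is bookkeeping: every invocation of \Cref{pro:step3} must have both parameters at least $2$. This holds because the parameters only increase from $a,b \ge 2$.

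It is also worth noting how this yields \Cref{thm:main}, given that the base case $J_q(n,2)$ is DOT for $n \ge \Nlist(q)$. That base case follows from \Cref{pro:step1} together with \Cref{pro:step2}, noting that $1-f$ of a trivial function is trivial. In the $J_q(a;b)$ notation it reads: $J_q(n-2;2)$ is DOT whenever $n \ge \Nlist(q)$. Dualizing gives that $J_q(2;n-2)$ is DOT. Then \Cref{cor:step3} with $a=2$, $b = \Nlist(q)-2$ handles all $J_q(a';b')$ with $a' \ge 2$ and $b' \ge \Nlist(q)-2$, and the symmetric application covers the remaining range. Hence $\Nmain(q) = 2\Nlist(q)$, say, suffices.
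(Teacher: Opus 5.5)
Your proof is correct and is exactly the paper's argument: raise $a$ to $a'$ with \Cref{pro:step3}, dualize with \Cref{lem:DOT-duality}, raise $b$ to $b'$ with \Cref{pro:step3} again (both parameters staying at least $2$), and dualize back. In your closing remark on \Cref{thm:main}, note that invoking the corollary with $b=\Nlist(q)-2$ needs $\Nlist(q)\ge 4$; the paper ensures this by working with $\max(\Nlist(q),4)$ instead.
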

\begin{proof}
Since $J_q(a;b)$ is DOT, \Cref{pro:step3} shows that $J_q(a';b)$ is DOT. \Cref{lem:DOT-duality} implies that $J_q(b;a')$ is DOT. \Cref{pro:step3} shows that $J_q(b';a')$ is DOT, and \Cref{lem:DOT-duality} completes the proof, showing that $J_q(a';b')$ is DOT.
\end{proof}

\Cref{thm:main} immediately follows.

\begin{proof}[Proof of \Cref{thm:main}]
\Cref{pro:step1,pro:step2} together show that $J_q(a;2)$ is DOT for all $a \ge \Nlist(q)-2$.
Defining $N'_0(q) = \max(\Nlist(q),4)$, \Cref{cor:step3} shows that $J_q(a';b')$ is DOT whenever $a' \ge N'_0(q)-2$ and $b' \ge 2$, and \Cref{lem:DOT-duality} implies that the same holds for $J_q(b';a')$.

Let $\Nmain(q) = 2(N'_0(q)-2)$. Defining $a' = \max(k,n-k)$ and $b' = \min(k,n-k)$, the assumptions imply that $a' \ge N'_0(q)-2$ and $b' \ge 2$, and so $J_q(n,k)$ is DOT. 
\end{proof}

\medskip

The main idea of the proof of \Cref{pro:step3} is to consider restrictions of the Boolean degree one function $f$ on $J_q(a';b)$ to subspaces of dimension $a+b$. For each such subspace $V$, the restriction $f|_V$ is a Boolean degree one function on $J_q(a;b)$, which is trivial by assumption. 

The Grassmann graph $G_q(a'+b,a+b)$ is the graph on $J_q(a'+b,a+b)$ in which two subspaces are connected if their intersection has dimension $a+b-1$. If $V \sim V'$ then the form of $f|_V$ significantly restricts the possible forms of $f|_{V'}$; we will show this by considering the possible forms of $f|_{V \cap V'}$. Using the connectivity of the Grassmann graph, this will allow us to show that $f$ is trivial.

The proof involves a fair amount of case analysis. For this reason, we work out the easier case of the Johnson scheme in \Cref{sec:restriction-johnson}. The Johnson scheme $J(n,k)$ consists of all $k$-subsets of $\{1,\dots,n\}$. The role of $x_p$ is played by $x_i$, which indicates that the input subset contains~$i$. The analog of \Cref{thm:main} for the Johnson scheme is also easy to prove directly; it holds whenever $\min(k,n-k) \ge 2$.

In order to cut the number of cases, we employ the following convention: $g^+ = g$ and $g^- = 1-g$. Every trivial function thus has one of the forms $0^\pm,x_p^\pm,y_r^\pm,(x_p+y_r)^\pm$.

\begin{proof}[Proof of \Cref{pro:step3}]
Let $n = a+b$ and $n' = a'+b$; we can assume that $n' > n$. Let $f$ be a Boolean degree one function on $J_q(n',b)$. For every $n$-dimensional subspace $V$, the restriction $f|_V$ is a Boolean degree one function on $J_q(n,b)$, and so by assumption, $f|_V$ is trivial.

The proof will involve the Grassmann graph $G = G_q(n',n)$ on the vertex set $J_q(n',n)$, where we connect two subspaces $V,V'$ if their intersection has dimension $n-1$. It is well-known that this graph is connected. If $V,V'$ are neighbors we denote this by $V \sim V'$.
In this case $V \cap V'$ is a codimension~$1$ subspace of $V$, which we denote by $V \cap V' \subs V$.

The subgraph $G_{r^-}$ of $G$ consisting of all subspaces not orthogonal to $r$ is also connected (when $n' > n$). To see this, let $V,V' \in G_q(n',n)$ be two subspaces not orthogonal to $r$. Let $p \in V$ and $p' \in V'$ be arbitrary vectors which are not orthogonal to $r$. Since $n \ge 2$, we can find $W \in G_q(n',n)$ containing both $p$ and $p'$. The subgraph $G_p$ of $G$ consisting of all subspaces containing $p$ is a Grassmann graph and so it is connected. Therefore we can find a path in $G_p$ from $V$ to $W$. We can similarly find a path in $G_{p'}$ from $W$ to $V'$. All vertices on the corresponding path from $V$ to $V'$ are subspaces containing either $p$ or $p'$, and so are not orthogonal to $r$.

\smallskip

The first step in the proof involves constructing the ``restriction table'', which describes the possible structures of $f|_U$ given $f|_V$, where $U \subs V \in J_q(n',n)$:
\[
\begin{array}{c|lll}
f|_V & f|_U \\\hline
0^\pm & 0^\pm \\
x_p^\pm & 0^\pm \text{ if } p \notin U, & x_p^\pm \text{ if } p \in U \\
y_r^\pm & 0^\mp \text{ if } r \perp U, & y_r^\pm \text{ if } r \not\perp U \\
(x_p+y_r)^\pm & 0^\mp \text{ if } r \perp U, & (x_p+y_r)^\pm \text{ if } p \in U, & y_r^\pm \text{ otherwise} 
\end{array}
\]


Let us say that a trivial function is of \emph{positive type} if it is of the form $0,x_p,y_r^-,(x_p+y_r)^-$. Otherwise, it is of \emph{negative type}. If $V \sim V'$ then $V \cap V' \subs V,V'$, and so the restriction table implies that $f|_V$ and $f|_{V'}$ have the same type. Since the Grassmann graph is connected, we see that either all $f|_V$ are of positive type, or all $f|_V$ are of negative type. Without loss of generality, assume that all $f|_V$ are of positive type.

In order to cut the case analysis,\footnote{We thank ChatGPT~5.6~Sol for this idea.} we write each function of positive type as $\alpha(f|_V) + \beta(f|_V)$, where $\alpha(f|_V)$ is either $0$ or of the form $\pm x_p$, and $\beta(f|_V)$ is either $0$ or of the form $y_r^-$. The corresponding restriction tables (derived from the previous restriction table) are

\[
\begin{array}[t]{c|l}
\alpha(f|_V) & \alpha(f|_U) \\\hline
0 & 0 \\
\pm x_p & 0 \text{ if } p \in U, \\ & \pm x_p \text{ if } p \notin U
\end{array}
\quad
\begin{array}[t]{c|l}
\beta(f|_V) & \beta(f|_U) \\\hline
0 & 0 \\
y_r^- & 0 \text{ if } r \perp U, \\ & y_r^- \text{ if } r \not\perp U
\end{array}
\]

We determine the possible behaviors of $\alpha(f|_V)$ and $\beta(f|_V)$ separately.

If $\alpha(f|_{V_0}) = \pm x_p$ for some $V_0,p$ then $\alpha(f|_V) = \pm x_p$ whenever $p \in V$ (with the same sign).
Indeed, consider the subgraph $G_p$ of $G$ consisting of all subspaces containing $p$, which is isomorphic to $G_q(n'-1,n-1)$, and is therefore connected. The restriction table implies that for $V \stackrel{G_p}\sim V'$, if $\alpha(f|_V) = \pm x_p$ then $\alpha(f|_{V'}) = \pm x_p$ as well. Since $\alpha(f|_{V_0}) = \pm x_p$ and $G_p$ is connected, it follows that $\alpha(f|_V) = \pm x_p$ for all $V \in G_p$.

We claim that moreover, $\alpha(f|_V) = 0$ whenever $p \notin V$. To see this, suppose that $\alpha(f|_V) \in \{ x_{p'}, -x_{p'} \}$. Since $n \ge 2$, we can find $W \in J_q(n',n)$ containing both $p$ and $p'$; but then $\alpha(f|_W) = \pm x_p$ and $\alpha(f|_W) \in \{x_{p'}, -x_{p'}\}$, and we reach a contradiction.

Similarly, if $\beta(f|_{V_0}) = y_r^-$ for some $V_0,r$ then $\beta(f|_V) = y_r^-$ whenever $r \not\perp V$.
Indeed, consider the subgraph $G_{r^-}$ of $G$ consisting of all subspaces not perpendicular to $r$, which is connected as shown above. The restriction table implies that for $V \stackrel{G_{r^-}}\sim V'$, if $\beta(f|_V) = y_r^-$ then $\beta(f|_{V'}) = y_r^-$ as well. Since $\beta(f|_{V_0}) = y_r^-$ and $G_{r^-}$ is connected, it follows that $\beta(f|_V) = y_r^-$ for all $V \in G_{r^-}$.

We claim that moreover, $\beta(f|_V) = 0$ whenever $r \perp V$. To see this, suppose that $\beta(f|_V) = y_{r'}^-$. Since $n \ge 2$, we can find a point $p \not\perp r,r'$ and a subspace $W \in J_q(n',n)$ containing $p$; but then $\beta(f|_W)$ would be equal to both $y_r^-$ and $y_{r'}^-$, and we reach a contradiction.

Considering the possible ways in which $\alpha(f|_V)$ and $\beta(f|_V)$ combine to make $f|_V$, we see that one of the following must hold:
\begin{itemize}
\item $\alpha(f|_V) = \beta(f|_V) = 0$ for all $V$. In this case, $f = 0$.
\item $\alpha(f|_V) \in \{0, x_p\}$, according to whether $p \in V$, and $\beta(f|_V) = 0$ for all $V$. In this case, $f = x_p$.
\item $\alpha(f|_V) = 0$ for all $V$, and $\beta(f|_V) \in \{0, y_r^-\}$, according to whether $r \perp V$. In this case, $f = y_r^-$.
\item $\alpha(f|_V) \in \{0, -x_p\}$, according to whether $p \in V$ or not, and $\beta(f|_V) \in \{0, y_r^-\}$, according to whether $r \perp V$ or not. Since $r \perp V$ must imply $p \notin V$, necessarily $p \not\perp r$. In this case, $f = (x_p + y_r)^-$. \qedhere
\end{itemize}
\end{proof}

\section{Field of size two}
\label{sec:F2}

In this section we show that we can take $\Nmain(2) = 4$ in \Cref{thm:main}. In view of \Cref{cor:step3}, it suffices to prove the following.

\begin{proposition} \label{pro:F2}
The Grassmann scheme $J_2(4,2)$ is DOT.
\end{proposition}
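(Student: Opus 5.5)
The plan is to reduce \Cref{pro:F2} to \Cref{pro:step2}, which holds for every $n \ge 3$ and so in particular for $n=4$, $q=2$. It therefore suffices to show directly that, after possibly replacing $f$ by $1-f$, every coefficient of a Boolean degree one function $f = \sum_p c_p x_p$ on $J_2(4,2)$ lies in $\{-\tfrac12, 0, \tfrac12, 1\}$. Note that the degree one representation on $J_2(4,2)$ is not unique, since $\sum_p x_p = 3$. So I would first fix a normalization, choosing the representation so that the coefficient pattern is as constrained as possible. The Ramsey machinery of \Cref{sec:coeffs} is useless at $n=4$, so it has to be replaced by a direct combinatorial analysis of $\mathrm{PG}(3,2)$.

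The basic constraint is the line condition: for every line $\{a,b,a+b\}$ we have $c_a + c_b + c_{a+b} \in \{0,1\}$. I would combine line conditions in two kinds of configurations.
\begin{enumerate}[(i)]
\item Inside a plane $A$ (a Fano plane), summing the line conditions over the $3$ lines through a point $p \in A$ gives
\[
 2c_p + \sum_{p' \in A} c_{p'} \in \{0,1,2,3\}.
\]
Comparing two points $p,p'$ of the same plane shows that $2(c_p - c_{p'}) \in \{-3,\dots,3\}$. Thus all coefficients lie in a single coset of $\tfrac12\mathbb{Z}$ and have spread at most $\tfrac32$.
\item In the star of a point $p$ (the $7$ lines through $p$, forming a Fano plane on the quotient), the same kind of summation gives the analogous relations.
\end{enumerate}

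Next I would use the normalization freedom, adding $t\sum_p x_p - 3t$, to shift the coefficients. Since $2c_p$ differences are integers, we can place all coefficients in $\tfrac12\mathbb{Z}$. Shifting further so that some line has all coefficients equal to $0$, or has pattern $\{1,0,0\}$, and using the line condition, the coefficients are confined to $\{-\tfrac12,0,\tfrac12,1\}$ or to the complementary pattern corresponding to $1-f$. The existence of such a line would come from a pigeonhole argument on the $35$ lines.

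The step I expect to be the main obstacle is justifying the normalization. This means showing that one can always choose the shift, or pass to $1-f$, so that a line with coefficient multiset $\{0^3\}$ or $\{1,0^2\}$ exists, and that no coefficient such as $\tfrac32$ or $-1$ survives. My first attempt would be a weight-counting argument: sum $f(\ell)$ over all $35$ lines to get $7\sum_p c_p$, and sum over the spread of $5$ lines to get $\sum_p c_p$. The second sum forces $\sum_p c_p \in \{0,\dots,5\}$, a small integer. This would narrow the global offset to finitely many cases, each of which I would then rule out or normalize by hand.

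If this structural route becomes too messy, the fallback is a finite verification. After normalization the coefficients lie in a bounded finite subset of $\tfrac12\mathbb{Z}$ by (i), so the Boolean degree one functions on $J_2(4,2)$ can be enumerated, in the spirit of the formalization. One would then check that each of them is trivial in the sense of \Cref{def:trivial}.
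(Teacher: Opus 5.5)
There is a genuine gap, located exactly at the normalization step you flagged. The freedom you want to use does not exist. On $J_2(4,2)$ the functions $x_p$ are linearly independent: the point--line incidence matrix $N$ of $\mathrm{PG}(3,2)$ satisfies $NN^T = 6I + J$, which is nonsingular. So $f = \sum_p c_p x_p$ has a unique representation. Adding $t(\sum_p x_p - 3)$ creates a constant term $-3t$; equivalently, shifting every coefficient by $t$ shifts every line sum by $3t$. The shifted coefficients then violate $c_a+c_b+c_{a+b}\in\{0,1\}$, and \Cref{pro:step2} cannot be applied to them.

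What your step (i) together with the line condition actually gives is that all $c_p$ lie in $\gamma + \tfrac12\mathbb{Z}$ with $3\gamma \in \tfrac12\mathbb{Z}$, i.e.\ $\gamma \in \{0,\tfrac16,\tfrac13\}$. The case $\gamma=0$ can be finished with your tools: spread at most $\tfrac32$ plus the line condition confine the $c_p$ to $\{-\tfrac12,0,\tfrac12,1\}$. The case $\gamma=\tfrac13$ reduces to it via $1-f=\sum_p(\tfrac13-c_p)x_p$. But $\gamma=\tfrac16$ is fixed by $f\mapsto 1-f$ and cannot be normalized away. It is exactly where the non-trivial Boolean functions on the Fano plane live (coefficients $-\tfrac13,\tfrac16,\tfrac23$ in \Cref{lem:J232}), so excluding it is the real content of the proposition. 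Your spread count does not exclude it. Write $c_p=\tfrac16+\tfrac{m_p}{2}$, so the line condition becomes $m_a+m_b+m_{a+b}\in\{-1,1\}$. Then $p\mapsto m_p+1 \bmod 2$ extends to a linear functional on $\FF_2^4$, so $m_p$ is odd on exactly $7$ or $15$ points. Hence $\sum_p c_p=\tfrac52+\tfrac12\sum_p m_p$ is an integer, consistent with the spread identity. Your fallback enumeration, run over $\tfrac12\mathbb{Z}$ after the illegitimate shift, would never see this case either.

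The missing idea is an argument that kills $\gamma=\tfrac16$. The paper gets it from the classification on $J_2(3,2)$. If all $c_p$ lie in $\tfrac16+\tfrac12\mathbb{Z}$, then by \Cref{lem:J232} every plane restriction is non-trivial. Each plane then contains exactly four points with coefficient $\tfrac16$. Double counting point--plane incidences (each point lies on $7$ planes) gives $7N=15\cdot 4$, which is impossible.

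Within your own framework there is a more elementary fix. By the parity observation above, all $m_p$ are odd on some plane $A$. By (i), $|m_p-m_{p'}|\le 3$ there, so they take only two values $\mu,\mu+2$ on $A$. A monochromatic line would have $m$-sum $3\mu$ or $3\mu+6$, never $\pm1$. So $A$ would be a $2$-colored Fano plane with no monochromatic line, which does not exist. With that case closed, your remaining steps do reduce the proposition to \Cref{pro:step2}, by a route that avoids the enumeration behind \Cref{lem:J232}.
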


The proof of \Cref{lem:quantization} shows that every function on $J_q(3,2)$ has degree one, and furthermore, every such function has a unique representation as a degree one polynomial. In the case of $q = 2$, we can determine this representation explicitly for all Boolean functions.

\begin{lemma} \label{lem:J232}
Let $f\colon J_2(3,2) \to \RR$ be a Boolean function.

If $f$ is trivial, then the coefficients $c_p$ belong to the set
\[
 \left\{
 -\frac{2}{3},-\frac{1}{2},-\frac{1}{6},0,\frac{1}{3},\frac{1}{2},\frac{5}{6},1
 \right\}.
\]

Otherwise, there exists a line $\ell$ and a point $r \in \ell$ such that
\begin{align*}
f &= \frac{2}{3} \sum_{\substack{s \in \ell \\ s \neq r}} x_s - \frac{1}{3} x_r + \frac{1}{6} \sum_{t \notin \ell} x_t, \text{ or } \\
f &= -\frac{1}{3} \sum_{\substack{s \in \ell \\ s \neq r}} x_s + \frac{2}{3} x_r + \frac{1}{6} \sum_{t \notin \ell} x_t.
\end{align*}
\end{lemma}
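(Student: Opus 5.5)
The plan is to invert the incidence matrix of the Fano plane explicitly, so that the coefficients $c_p$ of any function on $J_2(3,2)$ are given by a closed formula. Then we classify Boolean functions by the set of lines on which they equal $1$. Write $N$ for the $7\times 7$ point--line incidence matrix. Each point lies on $3$ lines and two distinct points lie on exactly one common line, so $N^{T}N = 2I + J$. Its inverse is $\frac12(I - \frac19 J)$, because $J^2 = 7J$. For a Boolean $f$ we let $\mathcal{L} = \{\ell : f(\ell) = 1\}$, $k = |\mathcal{L}|$, and $d_p$ be the number of lines of $\mathcal{L}$ through $p$. The unique representation from the proof of \Cref{lem:quantization} is then
\[
 c_p = \frac12\left(d_p - \frac{k}{3}\right).
\]
This is verified directly: summing over the three points of a line $\ell$ gives $\frac12(3f(\ell) + (k - f(\ell)) - k) = f(\ell)$.

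Next we reduce by complementation. Since $1 = \frac13\sum_p x_p$ on $J_2(3,2)$, replacing $f$ by $1-f$ sends $c_p \mapsto \frac13 - c_p$ and $k \mapsto 7-k$. The proposed set $\{-\frac23,-\frac12,-\frac16,0,\frac13,\frac12,\frac56,1\}$ is closed under $c\mapsto \frac13 - c$, and the two nontrivial forms in the statement are swapped by it. So it suffices to treat $k \in \{0,1,2,3\}$. The cases are as follows.
\begin{itemize}
\item $k=0$: this is $f=0$, with all coefficients $0$.
\item $k=1$: $\mathcal{L} = \{r^\perp\}$, so $f = y_r$, with coefficients $\frac12(d_p - \frac13) \in \{-\frac16,\frac13\}$.
\item $k=3$: the three lines are either concurrent at some $p$, giving $f = x_p$ with $c \in \{-\frac12, 0, 1\}$, or they form a triangle. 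A triangle is the complement of the four lines ``the three lines through $p$ together with $r^\perp$'' with $p \notin r^\perp$, so $f = 1 - x_p - y_r$, and the formula gives coefficients in $\{-\frac12,0,\frac12\}$.
\end{itemize}
Together with their complements, these give the trivial list $0,1,x_p,1-x_p,y_r,1-y_r,x_p+y_r,1-x_p-y_r$, and the coefficients computed from the formula all fall in the stated set.

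The remaining case is $k=2$, with complement $k=5$. Any two lines meet in a point $r$ and cover $5$ points. The two uncovered points, together with $r$, form the third line $\ell$ through $r$. Here $d_r = 2$, $d_s = 0$ for $s \in \ell\setminus\{r\}$, and $d_t = 1$ for the four points $t \notin \ell$. The formula gives $c_r = \frac23$, $c_s = -\frac13$ and $c_t = \frac16$, which is the second displayed form. Complementing gives the first form for $k=5$. These functions are indeed nontrivial, since $\frac16$ and $\pm\frac13$ are absent from the trivial coefficient lists (or simply because $k \equiv 2 \pmod 3$ never occurs above).

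The only step that needs care is the combinatorial classification: that three lines of the Fano plane are either concurrent or form a triangle, and identifying the triangle's complement with $x_p + y_r$. Everything else is mechanical arithmetic with the closed formula.
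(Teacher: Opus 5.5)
Your proof is correct, but it takes a different route from the paper.

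The paper never computes the coefficients of an arbitrary Boolean function. It reads off the expansions of $0,x_p,1-y_r,1-x_p-y_r$ from the proof of Proposition~\ref{pro:step2}, giving coefficients $-\frac12,0,\frac12,1$, and closes them under $c\mapsto\frac13-c$. It then checks directly that the two extra forms are Boolean, by sorting lines according to how they meet $\ell$. Exhaustiveness comes from counting, $2\cdot 1+4\cdot 7+2\cdot 28+2\cdot 21=128=2^7$. This tacitly uses that the listed functions are pairwise distinct, which follows from uniqueness of the representation.

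You instead invert $N^{T}N=2I+J$ to obtain $c_p=\frac12\bigl(d_p-\frac k3\bigr)$. You then classify the line sets $\mathcal{L}$ with $|\mathcal{L}|\le 3$ and handle the rest by complementation.

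The paper's route is shorter and needs no configuration analysis, but it requires knowing the complete list in advance. Yours derives the nontrivial forms rather than just verifying them. Your formula also generalizes verbatim to $c_p=\frac1q\bigl(d_p-\frac{k}{q+1}\bigr)$ on $J_q(3,2)$, which makes the finite set $C(q)$ of Lemma~\ref{lem:quantization} explicit.

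The step you flag is routine. For a triangle with vertices $a,b,c$, the seventh point $p$ lies on no side. The lines through $p$ are $pa,pb,pc$, and the last line consists of the three non-vertex points of the sides. So that line is some $r^\perp$ with $p\notin r^\perp$.

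There are two harmless slips. First, for $f=x_p$ one has $d_p=3$ and $d_{p'}=1$ for $p'\neq p$, so the coefficients are exactly $0$ and $1$; $-\frac12$ does not occur. Second, $\frac13$ is not absent from the trivial coefficients, since it occurs in $y_r$ and in $1$. However, $\frac16$ is absent, which suffices, and so does your $k\equiv 2\pmod 3$ observation.
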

\begin{proof}
The proof of \Cref{pro:step2} gives the expansions of the functions $0,x_p,1-y_r,1-x_p-y_r$. These involve the coefficients $-\frac{1}{2},0,\frac{1}{2},1$. The other trivial functions are obtained by complementing these functions. On $J_2(3,2)$ we have $1 = \frac{1}{3} \sum_p x_p$, and so the corresponding coefficients are obtained from the previous list by the transformation $c \mapsto \frac{1}{3} - c$.

The other two listed functions are also Boolean. To see this, note that one of the following holds for every line: (i) the line equals $\ell$; (ii) the line intersects $\ell$ at $r$; (iii) the line intersects $\ell$ at a point other than $r$. The corresponding values of $f$ are $1,0,1$ in the first case, and $0,1,0$ in the second case.

Since $\sqbinom{3}{2}_2 = 7$, there are $2^7 = 128$ Boolean functions on $J_2(3,2)$. Of those:
\begin{itemize}
\item $1$ each have the form $0, 1$.
\item $7$ each have the form $x_p,1-x_p,y_r,1-y_r$.
\item $7 \times 4$ each have the form $x_p+y_r,1-x_p-y_r$ (where $p \notin r^\perp$).
\item $7 \times 3$ each have one of the two non-trivial forms.
\end{itemize}
These are all the functions since
\[
 2 \cdot 1 + 4 \cdot 7 + 2 \cdot 28 + 2 \cdot 21 = 128. \qedhere
\]
\end{proof}

Considering restrictions to planes allows us to prove \Cref{pro:F2}.

\begin{proof}[Proof of \Cref{pro:F2}]
Let $f\colon J_2(4,2) \to \RR$ be a Boolean degree one function, and write
\[
 f = \sum_p c_p x_p.
\]

For each plane $A$, the restriction of $f$ to $A$ has the form
\[
 f|_A = \sum_{p \in A} c_p x_p.
\]

In view of \Cref{lem:J232}, either all coefficients in $A$ belong to $C_+ \cup C_-$, or all of them belong to $D$, where
\[
 C_+ = \{-\tfrac12,0,\tfrac12,1\}, C_- = \{-\tfrac23,-\tfrac16,\tfrac13,\tfrac56\}, D = \{-\tfrac13,\tfrac16,\tfrac23\}.
\]

We can classify the planes in $J_2(4,2)$ into type $C$ and type $D$ accordingly. We claim that all planes need to have the same type. Indeed, a plane of type $C$ and a plane of type $D$ have a line in common, and the corresponding coefficients lie in both $C_+ \cup C_-$ and $D$, which is impossible.

Let us first rule out the case in which all planes are of type $D$. Indeed, if this is the case, then $\frac47$ of the coefficients must be $\frac16$, since each plane contains exactly $4$ coefficients equal to $\frac16$. However, this is impossible, since there are exactly $\sqbinom{4}{1}_2 = 15$ coefficients, and $7 \nmid 15$.

Hence all planes are of type $C$. We claim that for each line, either all coefficients belong to $C_+$, or all of them belong to $C_-$. Indeed, $C_+ \subseteq \frac{\mathbb{Z}}{2}$ and $C_- \subseteq \frac{1}{3} + \frac{\mathbb{Z}}{2}$. If we add three such coefficients and get something in $\{0,1\} \subseteq \frac{\mathbb{Z}}{2}$, then necessarily either all such coefficients come from $C_+$, or all of them come from $C_-$.\footnote{This argument, and the one in the following paragraph, works for all $q$.}

We can classify the lines into two types $C_+,C_-$ accordingly. Intersecting lines have the same type. Since the Grassmann graph on the lines is connected, we see that all lines have the same type. In other words, either all coefficients are in $C_+$, or all of them are in $C_-$.

If all coefficients are in $C_+$ then \Cref{pro:step2} completes the proof. If all coefficients are in $C_-$ then the coefficients of $1-f$ are in $C_+$, and again \Cref{pro:step2} completes the proof.
\end{proof}

\appendix

\section{Restriction on the Johnson scheme}
\label{sec:restriction-johnson}

The Johnson scheme $J(n,k)$ consists of all subsets of $[n] := \{1,\dots,n\}$ of size $k$. A function on $J(n,k)$ has \emph{degree one} if it can be written as
\[
 f = \sum_{i=1}^n c_i x_i,
\]
where $x_i \in \{0,1\}$ indicates whether the input subset contains $i$.
We could also allow a constant term, but this is not necessary since $\sum_i x_i = k$ on $J(n,k)$.

The analog of \Cref{thm:main} for the Johnson scheme is as follows.

\begin{theorem} \label{thm:main-johnson}
If $\min(k,n-k) \ge 2$ then every Boolean degree one function on $J(n,k)$ is of one of the forms $0, 1, x_i, 1-x_i$.    
\end{theorem}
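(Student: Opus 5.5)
The plan is to prove the base case $J(4,2)$ directly and then lift it to all $n,k$ with $\min(k,n-k)\ge 2$ by restriction, mirroring \Cref{pro:step3} and \Cref{cor:step3}.

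\textbf{Base case.} Let $f=\sum_i c_i x_i$ be Boolean on $J(4,2)$, so $c_i+c_j\in\{0,1\}$ for all $i\ne j$. Among any three indices $i,j,l$ we have $c_i-c_l=(c_i+c_j)-(c_l+c_j)\in\{-1,0,1\}$. Moreover, if $c_i\ne c_l$, then the value $c_i+c_j$ for each other $j$ differs from $c_l+c_j$, so it is determined.

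Now take the pairs $\{1,2\},\{3,4\}$ and $\{1,3\},\{2,4\}$. The sums give $c_1+c_2+c_3+c_4\in\{0,1,2\}$. A short case check then shows that the multiset of coefficients is one of $\{0^4\}$, $\{1,0^3\}$, $\{\tfrac12^4\}$, $\{-\tfrac12,\tfrac12^3\}$, up to the shift by $\tfrac12$. Rather than enumerate, I would argue as follows. Suppose the coefficients take two distinct values $a<b$, with $b=a+1$. Then $a+a$ (if $a$ is repeated) and $a+b$ lie in $\{0,1\}$; these differ by $1$, so $2a=0$ and $a=0$. Similarly, if $b$ is repeated, then $2b$ and $a+b$ lie in $\{0,1\}$, forcing $2b=1$ and $b=\tfrac12$, which contradicts $b=a+1$ with $a=0$. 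Hence exactly one value is unrepeated. If all coefficients are equal, then $2c\in\{0,1\}$.

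Finally, I would use the identity $\sum_i x_i=2$, i.e.\ $\tfrac12\sum_i x_i=1$, to translate back. The all-$\tfrac12$ representation is the function $1$, and $\{-\tfrac12,\tfrac12^3\}$ is $1-x_i$. This yields exactly $0,1,x_i,1-x_i$.

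\textbf{Restriction step.} Next I would show that if $J(n,k)$ is trivial in this sense with $k,n-k\ge 2$, then so is $J(n+1,k)$. Restricting $f$ to the sets avoiding $m$, for each $m\in[n+1]$, gives a copy of $J(n,k)$ on which $f$ is trivial. Since the representation on the restriction differs from the global one only by a multiple of $\sum x_i$, the type of the restriction is read off the coefficients: the restriction is $0$ or $x_i$ when the coefficients are of positive type, and $1$ or $1-x_i$ otherwise.

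Two restrictions, to $[n+1]\setminus\{m\}$ and to $[n+1]\setminus\{m'\}$, share the copy of $J(n-1,k)$ on $[n+1]\setminus\{m,m'\}$. This is nondegenerate provided $n-1-k\ge1$, i.e.\ whenever $n-k\ge2$. The shared restriction pins down compatible types and the special index. Hence either all restrictions are $0$, giving $f=0$; or one special $i$ appears in all restrictions containing $i$, giving $f=x_i$; and dually for $1,1-x_i$.

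Duality via complements, $f\mapsto f(\,[n]\setminus S\,)$, maps $x_i$ to $1-x_i$ and preserves degree one. This gives growth in $k$, exactly as in \Cref{cor:step3}.

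\textbf{Main obstacle.} The main care point is the compatibility argument. The triviality of the restriction must determine the global coefficients uniquely, but coefficients are only determined modulo $\sum x_i$. I would handle this by noting that a fixed $f$ has global coefficients $c_i$ whose restriction to $J(n,k)$ is unique up to adding $t\sum x_i$. Comparing two overlapping restrictions then fixes $t$ consistently, because $J(n-1,k)$ with $k\ge 2$ and $n-1-k\ge1$ already distinguishes the four forms.
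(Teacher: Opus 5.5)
Your proof is correct, but it takes a different route from the paper's proof of this theorem. The paper argues directly for all $n,k$ at once. The swap identity $f(S)-f(S\triangle\{i,j\})=c_i-c_j$ (with $i\in S$, $j\notin S$) puts every coefficient in $\{c,c+1\}$. The double swap $f(S)-f(S\triangle\{i,j,u,v\})=\delta_i+\delta_j-\delta_u-\delta_v$ then rules out two $1$'s together with two $0$'s; this step is exactly where $k\ge 2$ and $n-k\ge 2$ enter. Evaluating $f$ on one set containing the special index and one set avoiding it then pins down $c$.

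You instead prove $J(4,2)$ by hand and lift it by restriction plus complementation duality. This is the alternative the paper sketches in \Cref{sec:restriction-johnson}: \Cref{pro:step3-johnson} combined with the bootstrapping of \Cref{cor:step3}. Your $J(4,2)$ computation supplies the base case that the paper leaves unspecified there. Your base case is really the paper's direct argument in miniature, with the pairwise sums $c_i+c_j\in\{0,1\}$ playing the role of the swaps. That argument works verbatim on every $J(n,k)$, so for the Johnson scheme the induction adds length rather than power. What your route buys is a rehearsal of the restriction template, which the paper needs for the Grassmann scheme, where no such direct argument is available.

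One correction to your ``main obstacle'': there is no ambiguity modulo $\sum_i x_i$. Because $\sum_i x_i=k$ is a nonzero constant and there is no constant term, the $x_i$ are linearly independent on $J(n,k)$ for $0<k<n$. Indeed, a swap shows that all coefficients of a vanishing combination equal some $c$, and then $kc=0$. Hence $f$ restricted to the $k$-subsets of $S$ has coefficients exactly $(c_i)_{i\in S}$. The type of every restriction can then be read off from any single global coefficient, since $\{0,1\}$ and $\{\frac1k,\frac1k-1\}$ are disjoint for $k\ge 2$. This makes your compatibility step easier than you feared.
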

\begin{proof}
Let $f = \sum_{i=1}^n c_i x_i$ be a Boolean function on $J(n,k)$.

For every distinct $i,j$ we can find a set $S \in J(n,k)$ containing $i$ but not $j$. Then $f(S) - f(S \triangle \{i,j\}) = c_i - c_j$, showing that $c_i - c_j \in \{-1,0,1\}$. If $c = \min_i c_i$, then this shows that $c_i \in \{c, c+1\}$ for all $i$. Accordingly, we define $\delta_i := c_i - c \in \{0,1\}$.

For every distinct $i,j,u,v$ we can find a set $S \in J(n,k)$ containing $i,j$ but not $u,v$. Then $f(S) - f(S \triangle \{i,j,u,v\}) = c_i + c_j - c_u - c_v = \delta_i + \delta_j - \delta_u - \delta_v$. Since the right-hand side belongs to $\{-1,0,1\}$, this rules out the possibility that $\delta_i=\delta_j=1$ and $\delta_u=\delta_v=0$. Therefore either at most one $\delta_i$ is equal to $1$, or at most one $\delta_i$ is equal to $0$ (and so exactly one $\delta_i$ is equal to $0$, by definition of $c$).

If $\delta_i = 0$ for all $i$ then $f$ is constant, and so either $f = 0$ or $f = 1$.

If $\delta_{i_0} = 1$ and $\delta_i = 0$ for $i \neq i_0$ then choose a set $S_0$ not containing $i_0$ and a set $S_1$ containing it. Since $f(S_0) = kc$ and $f(S_1) = kc+1$, we see that $c = 0$, and so $f = x_{i_0}$.

If $\delta_{i_0} = 0$ and $\delta_i = 1$ for $i \neq i_0$ then choose a set $S_0$ not containing $i_0$ and a set $S_1$ containing it. Since $f(S_0) = k(c+1)$ and $f(S_1) = k(c+1)-1$, we see that $c+1 = 1/k$, and so $f = 1 - x_{i_0}$.
\end{proof}

While \Cref{thm:main-johnson} is easy to prove directly, we describe in this section an alternative proof based on an analog of \Cref{pro:step3}.

\begin{proposition} \label{pro:step3-johnson}
Let $n > k \ge 2$. Suppose that every Boolean degree one function on $J(n,k)$ is of one of the forms $0, 1, x_i, 1-x_i$. Then the same holds for $J(n',k)$ for all $n' \ge n$.
\end{proposition}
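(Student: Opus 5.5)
We mirror the proof of \Cref{pro:step3}, replacing subspaces by subsets. We may assume $n' > n$. Let $f = \sum_i c_i x_i$ be a Boolean degree one function on $J(n',k)$. For every $n$-subset $V \subseteq [n']$, the restriction $f|_V$ (to $k$-subsets of $V$) is a Boolean degree one function on $J(n,k)$, and is therefore of one of the forms $0^\pm, x_i^\pm$ with $i \in V$. We use the graph $G$ on $n$-subsets of $[n']$ in which $V \sim V'$ if $|V \cap V'| = n-1$. This graph is connected (it is the Johnson graph $J(n',n)$). For a fixed $i$, the subgraph $G_i$ of sets containing $i$ is isomorphic to $J(n'-1,n-1)$ and is also connected.

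\textbf{Restriction table.} For $U \subseteq V$ with $|U| = n-1$, note that $n - 1 > k$ when $n-1 \ge k+1$. In that case the functions $0,1,x_i,1-x_i$ remain distinct on $J(U,k)$, where they are always nonconstant or distinct constants. The table is: if $f|_V = 0^\pm$ then $f|_U = 0^\pm$; if $f|_V = x_i^\pm$ then $f|_U = x_i^\pm$ when $i \in U$ and $0^\pm$ otherwise.

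The boundary case $n = k+1$ needs care. Here $U$ has size $k$, so $J(U,k)$ is a single point and the restriction only records a value. There $x_i$ and $1$ restrict to the same value, so the clean type argument breaks. I would handle it by arguing directly on $f$ instead of through $U$. Alternatively, I would take $U$ with $|V \cap V'| = n-1$ and compare $f|_V$ and $f|_{V'}$ on their common $k$-sets, checking that agreement on all $k$-subsets of $V\cap V'$ forces compatible forms whenever $n-1 \ge k+1$. I expect this small-case bookkeeping to be the main obstacle, and might simply invoke \Cref{thm:main-johnson} for $n=k+1$ if the restriction argument gets messy. Note that when $n = k+1 \ge 3$ and $k\ge2$, $J(k+1,k)$ is not actually in the hypothesis range of that theorem, so this route has a gap I would need to check.

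\textbf{Sign and propagation.} The table shows that $f|_V$ and $f|_{V'}$ have the same ``sign'' $\pm$ when $V \sim V'$. By connectivity of $G$, and replacing $f$ by $1-f$ if needed, all restrictions are of the form $0$ or $x_i$. Suppose $f|_{V_0} = x_i$. Along edges of $G_i$ the value $x_i$ propagates, so $f|_V = x_i$ for all $V \ni i$. If some $V \not\ni i$ had $f|_V = x_j$, pick $W \supseteq \{i,j\}$, which is possible since $n \ge 2$. Then $f|_W$ would equal both $x_i$ and $x_j$, a contradiction since $x_i \ne x_j$ on $J(W,k)$ when $n > k \ge 1$. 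Hence $f|_V = 0$ whenever $i \notin V$.

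\textbf{Conclusion.} Every $k$-set lies in some $V$, so the restrictions determine $f$: either $f = 0$ everywhere, or $f = x_i$. Undoing the complementation gives $1$ or $1 - x_i$.
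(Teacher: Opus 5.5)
Your argument for $n\ge k+2$ is correct and is the paper's argument. The gap is the case $n=k+1$: you rightly flag it but leave it open, and none of your suggested patches closes it. Your second suggestion is just the main case again. The paper's own proof has the same gap, unstated. Its claim that positivity passes from $f|_S$ through $f|_{S\cap S'}$ to $f|_{S'}$ needs the forms $0^\pm,x_i^\pm$ to be distinguishable on $J(S\cap S',k)$, and this fails when $|S\cap S'|=k$.

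Moreover, no argument that uses the degree-one property of $f$ only through the triviality of its restrictions can handle this case. For $k\ge 3$ the case is vacuous (see below), so the only live case is $(n,k)=(3,2)$. There all $8$ Boolean functions on $J(3,2)$ are of the forms $0,1,x_i,1-x_i$, so the hypothesis carries no information. Concretely, take the indicator of the single set $\{1,2\}$ on $J(4,2)$. It restricts to $1-x_3$, $1-x_4$, $0$, $0$ on $\{1,2,3\}$, $\{1,2,4\}$, $\{1,3,4\}$, $\{2,3,4\}$ respectively. Every restriction is trivial, the signs differ across the edge $\{1,2,3\}\sim\{1,3,4\}$, and the function is not trivial (it is not even of degree one). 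Your proof, run verbatim, would declare it trivial.

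To close the gap, first note that for $k\ge 3$ and $n=k+1$ the hypothesis is false. Write $S_j=[k+1]\setminus\{j\}$. Every function on $J(k+1,k)$ has degree one, since the map $c\mapsto(\sum_i c_i-c_j)_j$ is the all-ones matrix minus the identity, with eigenvalues $k$ and $-1$, hence invertible. But the indicator of $\{S_1,S_2\}$ is Boolean and equals $1$ on two sets, whereas $0,1,x_i,1-x_i$ equal $1$ on $0,k+1,k,1$ sets respectively.

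For $(n,k)=(3,2)$ you must use that $f$ has degree one on $J(n',2)$ itself. Your worry about \Cref{thm:main-johnson} is aimed at the wrong scheme: you would apply it to $J(n',k)$, not $J(n,k)$. Since $n'>n=k+1$ gives $n'-k\ge 2$, that scheme is in range. This is logically valid, but it makes the proposition an immediate corollary of the theorem. The cleaner repair mirrors the role of $a\ge 2$ in \Cref{pro:step3}: assume $n\ge k+2$ in the proposition, and take $J(4,2)$, checked directly, as the base case.
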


Given this \namecref{pro:step3-johnson}, we can conclude the proof as in \Cref{sec:restriction} given an appropriate base case.

The proof of \Cref{pro:step3-johnson} follows the same steps as the proof of \Cref{pro:step3} in \Cref{sec:restriction}. In order to reduce the number of cases, we will replace $0,1,x_i,1-x_i$ by $0^+,0^-,x_i^+,x_i^-$, respectively.

The first step is to construct a ``restriction table''. Given $S \subseteq [n']$ of size $n$ and $T \subset S$ of size $n-1$, we consider the possible structures of $f|_T$ given the structure of $f|_S$:
\[
\begin{array}{c|ll}
f|_S & f|_T \\\hline
0^\pm & 0^\pm \\
x_i^\pm & 0^\pm \text{ if } i \notin T, & x_i^\pm \text{ if } i \in T
\end{array}
\]


The graph on $J(n',n)$ in which we connect two sets $S,S'$ if their intersect has size $n-1$ is known as the \emph{Johnson graph} $G(n',n)$, and it is always connected.

The restriction table implies that if $S \sim S'$ and $f|_S$ is positive (of the form $0^+$ or $x_i^+$) then so are $f|_{S \cap S'}$ and $f|_{S'}$. Since the Johnson graph is connected, we conclude that either all $f|_S$ are positive, or all of them are negative. Without loss of generality, assume that all of them are positive.

If $f|_S = 0$ for all $S \in J(n',n)$ then $f = 0$.

Next, suppose that $f|_{S_0} = x_{i_0}$ for some $S_0$. Consider the subgraph $G_{i_0}$ of $J(n',n)$ consisting of all sets containing ${i_0}$, which is a copy of the Johnson graph $G(n'-1,n-1)$. The restriction table implies that if $S \stackrel{G_i}\sim S'$ and $f|_S = x_{i_0}$ then $f|_{S \cap S'} = x_{i_0}$ and hence also $f|_{S'} = x_{i_0}$. Since $G_{i_0}$ is connected and $f|_{S_0} = x_{i_0}$, we conclude that $f|_S = x_{i_0}$ whenever $i_0 \in S$.

The foregoing implies that $f|_S \in \{0, x_{i_0}\}$ for all $S$. Indeed, suppose that $f|_{S_1} = x_{i_1}$ for some $i_1 \neq i_0$. Then $f|_S = x_{i_1}$ whenever $i_1 \in S$. Since $k \ge 2$, we can find a set $S$ containing both $i_0$ and $i_1$. Then $f|_S = x_{i_0}$ since $i_0 \in S$ and $f|_S = x_{i_1}$ since $i_1 \in S$, and we reach a contradiction.

Since $f|_S = x_{i_0}$ whenever $i_0 \in S$ and $f|_S = 0$ whenever $i_0 \notin S$ (since $f|_S = x_{i_0}$ is impossible in this case), we conclude that $f = x_{i_0}$.

\section{Ramsey theorems}
\label{sec:ramsey}

Our proof employed two Ramsey theorems which we prove here, given the well-known Hales--Jewett theorem.

\subsection{Affine Ramsey from Hales--Jewett}
\label{sec:ramsey-affine}

The proofs of \Cref{lem:ramsey-induction,lem:ramsey-conclusion} employed the following Ramsey theorem.

\begin{proposition} \label{pro:ramsey-affine}
Suppose that the vectors in $\FF_q^n$ are colored using $c$ colors. For every $r$, provided that $n \ge \Nramaff(q,c,r)$, we can find a homogeneous $r$-dimensional affine subspace of $\FF_q^n$.
\end{proposition}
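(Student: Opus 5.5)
We derive the statement from the Hales--Jewett theorem by induction on $r$, using the combinatorial lines of $\FF_q^N$ as the basic building block. Recall the Hales--Jewett theorem: for every finite alphabet $\Sigma$ and number of colors $c$ there is $\mathrm{HJ}(|\Sigma|,c)$ such that whenever $N \ge \mathrm{HJ}(|\Sigma|,c)$ and $\Sigma^N$ is $c$-colored, there is a monochromatic combinatorial line. Here a combinatorial line is given by a nonempty set of active coordinates $I \subseteq [N]$ and fixed values $a_j$ for $j \notin I$, and consists of the $|\Sigma|$ words that equal $a_j$ off $I$ and are constant on $I$. We take $\Sigma = \FF_q$. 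The combinatorial line is then exactly the set $\{ a + \lambda \mathbf{1}_I : \lambda \in \FF_q\}$, where $a$ is zero on $I$. This is an affine line, since $\mathbf{1}_I \neq 0$. This gives the case $r = 1$ with $\Nramaff(q,c,1) = \mathrm{HJ}(q,c)$.

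For general $r$ we use the multidimensional version of Hales--Jewett. The cleanest route is to apply plain Hales--Jewett over the alphabet $\Sigma = \FF_q^{r}$ and identify $(\FF_q^r)^N$ with $\FF_q^{rN}$. A vector $x \in \FF_q^{rN}$ is viewed as a word $(x^{(1)},\dots,x^{(N)})$ with letters $x^{(j)} \in \FF_q^r$, and it inherits the color of $x$. Take $N = \mathrm{HJ}(q^r,c)$. We obtain a monochromatic combinatorial line: a set $I \neq \emptyset$ and fixed letters $a_j \in \FF_q^r$ for $j \notin I$ such that all words with letter $\lambda \in \FF_q^r$ on $I$ and $a_j$ elsewhere share a color. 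As $\lambda = (\lambda_1,\dots,\lambda_r)$ ranges over $\FF_q^r$, these words form the set
\[
a + \sum_{i=1}^r \lambda_i e_i,
\]
where $e_i \in \FF_q^{rN}$ has a $1$ in the $i$th coordinate of every block $j \in I$ and zeros elsewhere. The vectors $e_1,\dots,e_r$ have disjoint supports and are nonzero, so they are linearly independent. Hence this set is an $r$-dimensional affine subspace, and it is homogeneous. So $\Nramaff(q,c,r) = r\,\mathrm{HJ}(q^r,c)$ suffices for $n = rN$.

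For $n \ge rN$ that is not a multiple of $r$, we restrict the coloring to the coordinate subspace $\FF_q^{rN} \times \{0\}^{n-rN}$. An affine subspace of that subspace is an affine subspace of $\FF_q^n$, so the result carries over.

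The only step needing care is the linear independence and the check that the combinatorial line really traces a full affine subspace rather than a subset. Both follow from the disjoint supports of the $e_i$, so I expect no real obstacle; this route avoids the induction on $r$ altogether.
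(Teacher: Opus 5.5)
Your proposal is correct and is essentially the paper's proof. The paper likewise applies Hales--Jewett over the alphabet $\FF_q^r$ with $\NHJ(q^r,c)$ blocks, reads off the affine subspace $w_0 + \langle w_1,\dots,w_r\rangle$ from the homogeneous combinatorial line, and embeds $\FF_q^{mr}$ in $\FF_q^n$ for larger $n$. As you note at the end, the induction on $r$ announced in your first sentence and the separate $r=1$ case are unnecessary.
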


In this appendix we show how to derive \Cref{pro:ramsey-affine} from the Hales--Jewett theorem.

\begin{proposition}[Hales--Jewett] \label{pro:HJ}
Suppose that $[k]^m$ is colored using $c$ colors. Provided that $m \ge \NHJ(k,c)$, we can find a homogeneous \emph{combinatorial line}: a word $w \in ([k] \cup \{*\})^m$ with at least one star such that the $k$ words obtained by replacing all the stars by the same value have the same color.
\end{proposition}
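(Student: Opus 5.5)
We prove the Hales--Jewett theorem by induction on $k$, using the classical ``color focusing'' argument. For $k = 1$ there is nothing to prove: any word with a star is a homogeneous combinatorial line, so $\NHJ(1,c) = 1$. Assume the statement holds for $k-1$ and every number of colors, and fix a $c$-coloring $\chi$ of $[k]^m$.

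For a combinatorial line $L$ with points $L(1),\dots,L(k)$, call $L(k)$ its \emph{endpoint}. Say that lines $L_1,\dots,L_s$ are \emph{color-focused} if they satisfy two conditions:
\begin{itemize}
\item for each $j$, the points $L_j(1),\dots,L_j(k-1)$ all have the same color $\gamma_j$;
\item the colors $\gamma_1,\dots,\gamma_s$ are distinct, and all the $L_j$ share the same endpoint.
\end{itemize}

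The key claim is the following. For every $s \le c$ there is $M(s)$ such that every $c$-coloring of $[k]^{M(s)}$ contains either a homogeneous combinatorial line or $s$ color-focused lines. Granting the claim for $s = c$, take a coloring with no homogeneous line, so that $c$ color-focused lines exist. The common endpoint has one of the $c$ colors, say $\gamma_j$, and then $L_j$ is homogeneous, which is a contradiction. Hence $\NHJ(k,c) = M(c)$ works.

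We prove the claim by induction on $s$. For $s = 0$ take $M(0)=0$, or for $s=1$ apply the $k-1$ case directly. For the step $s \to s+1$, put
\[
 m' = \NHJ(k-1, c^{k^{M(s)}}), \qquad M(s+1) = m' + M(s).
\]
Write each word as $xy$ with $x \in [k]^{m'}$ and $y \in [k]^{M(s)}$. Color each prefix $x$ by the whole function $y \mapsto \chi(xy)$; this uses at most $c^{k^{M(s)}}$ colors.

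Restricting this coloring to $[k-1]^{m'}$, the induction hypothesis on $k$ yields a combinatorial line $\lambda$ such that $\lambda(1),\dots,\lambda(k-1)$ induce identical colorings of the suffix block. Note that $\lambda$ extends to a line in $[k]^{m'}$ by substituting $k$ for its stars. Apply the $s$ case to the block $\{\lambda(1)\}\times[k]^{M(s)}$. If it contains a homogeneous line we are done. Otherwise we obtain color-focused lines $L_1,\dots,L_s$ with common endpoint $e$ and distinct colors $\gamma_1,\dots,\gamma_s$.

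We now build $s+1$ lines in $[k]^{M(s+1)}$.
\begin{itemize}
\item For $j = 1,\dots,s$, let $L'_j$ be the line whose $i$-th point is $\lambda(i)L_j(i)$. Its stars lie in both halves of the word.
\item Let $L'_{s+1}$ be the line whose $i$-th point is $\lambda(i)e$.
\end{itemize}
For $i \le k-1$, the prefix $\lambda(i)$ induces the same suffix coloring as $\lambda(1)$. Hence $L'_j(i)$ has color $\gamma_j$ for $j \le s$, and $L'_{s+1}(i)$ has color $\chi(\lambda(1)e)$. All $s+1$ lines share the endpoint $\lambda(k)e$.

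There are two cases.
\begin{itemize}
\item If $\chi(\lambda(1)e) = \gamma_j$ for some $j \le s$, then $L_j$, whose endpoint is $e$, is a homogeneous line inside the block $\{\lambda(1)\}\times[k]^{M(s)}$, and we are done.
\item Otherwise the $s+1$ colors are distinct, and $L'_1,\dots,L'_{s+1}$ are color-focused.
\end{itemize}

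The only delicate step is the bookkeeping in the induction on $s$. We must check that the $L'_j$ really are combinatorial lines, since they have stars in both parts and at least one star. We must also check that their first $k-1$ points inherit the colors $\gamma_j$, which is exactly why the prefix coloring records the entire suffix coloring. Finally, the $k-1$ hypothesis is applied only to $[k-1]^{m'}$, so the value $k$ in the prefix is used solely to produce the common endpoint.
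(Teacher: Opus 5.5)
The paper gives no proof of this proposition: its appendix treats the Hales--Jewett theorem as a known black box and only uses it to derive \Cref{pro:ramsey-affine}. Your argument is the classical color-focusing proof: an outer induction on $k$, an inner induction on the number $s$ of focused lines, and the $k-1$ case applied to the prefix block, colored by the entire induced coloring of the suffix block. It is correct. Each $L'_j$ is a genuine combinatorial line because $\lambda$ already contains a star. Its first $k-1$ points inherit $\gamma_j$ precisely because $\lambda(1),\dots,\lambda(k-1)$ induce the same suffix coloring. All the $L'_j$ end at $\lambda(k)e$, and the final two cases are exhaustive.

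Two bookkeeping remarks. First, the statement is for all $m \ge \NHJ(k,c)$, while you only treat $m = M(c)$. Add the one-line observation that fixing the surplus coordinates embeds $[k]^{M(c)}$ in $[k]^m$ and maps combinatorial lines to combinatorial lines. Second, of your two base cases, $s=1$ is the cleaner one. Stepping up from $s=0$ needs the convention that the empty word is the common endpoint of an empty family of lines.

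Compared with the paper, which simply outsources this step, your route makes the appendix fully self-contained. The only price is that color focusing gives Ackermann-type bounds on $\NHJ(k,c)$; primitive recursive bounds require Shelah's proof. This is immaterial here, since only the existence of $\NHJ$ is used.
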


\begin{proof}[Proof of \Cref{pro:ramsey-affine}]
Let $m = \NHJ(q^r,c)$ and $\Nramaff(q,c,r) = mr$. We will prove the result for $n = mr$; it then follows for larger $n$ by considering a copy of $\FF_q^{mr}$ inside $\FF_q^n$.

Given a $c$-coloring of $\FF_q^n$, we construct a $c$-coloring of $[q^r]^m$ in the natural way, by interpreting $[q^r]$ as $\FF_q^r$. \Cref{pro:HJ} thus gives a homogeneous combinatorial line $w$.

Let $w_0$ be the word obtained from $w$ by replacing the stars with zeroes, and for $i \in [r]$ let $w_i$ be obtained from $w$ by (i) replacing all non-stars with zero, (ii) replacing all stars by $e_i$, where $\{e_1,\dots,e_r\}$ is an arbitrary basis of $\FF_q^r$. The $r$-dimensional affine subspace corresponding to $w_0 + \langle w_1,\dots,w_r \rangle$ is homogeneous.
\end{proof}

\subsection{Projective Ramsey from Affine Ramsey}
\label{sec:ramsey-projective}

The proof of \Cref{lem:homogeneous} employed the following Ramsey theorem.

\begin{proposition} \label{pro:ramsey-projective}
Suppose that the points in $\FF_q^n$ are colored using $c$ colors. For every $r$, provided that $n \ge \Nrampro(q,c,r)$, we can find a homogeneous $r$-dimensional subspace of $\FF_q^n$.
\end{proposition}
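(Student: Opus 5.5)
The plan is to bootstrap \Cref{pro:ramsey-projective} from \Cref{pro:ramsey-affine} in two stages. The first stage is a ``one-step'' helper lemma that produces a subspace with a hyperplane whose complement is monochromatic. The second stage iterates this helper lemma and finishes with a pigeonhole argument.

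\emph{Helper step.} Given a coloring of the points of $\FF_q^N$ with $c$ colors, color each non-zero vector $v$ by the color of $\langle v\rangle$, and give $0$ a new $(c+1)$-st color. By \Cref{pro:ramsey-affine} with $c+1$ colors, if $N \ge \Nramaff(q,c+1,s)$ there is a homogeneous $s$-dimensional affine subspace $W+u$, with $s \ge 1$. It cannot contain $0$: otherwise it would contain both $0$ and a non-zero vector, which have different colors. Hence $u \notin W$. Every point of $V := \langle W,u\rangle$ that is not in $W$ has the form $\langle \alpha(w+u)\rangle = \langle w+u\rangle$ for some $w \in W$ and $\alpha \neq 0$. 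Therefore all points of $V \setminus W$ share a single color. This gives the helper statement: for $N$ large in terms of $q,c,s$ there are subspaces $W \subs V$ with $\dim W = s$ such that $V\setminus W$ is monochromatic.

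\emph{Iteration.} Set $t = c(r-1)+1$. Apply the helper step $t$ times in a nested fashion. Start in $\FF_q^n$ and obtain $W_1 \subs V_1$ with $V_1\setminus W_1$ of color $c_1$. Then apply the helper step inside $W_1$, which is a copy of $\FF_q^{\dim W_1}$, to obtain $W_2 \subs V_2 \subseteq W_1$ with color $c_2$, and so on. The required dimensions are defined backwards. At the last step we only need $\dim W_t \ge 0$, so any $V_t$ of dimension at least $1$ suffices. At each earlier step we need $\dim W_i$ to be at least the ambient dimension required by step $i+1$. This defines $\Nrampro(q,c,r)$ as a $t$-fold iterate of $\Nramaff(q,c+1,\cdot)$.

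\emph{Conclusion.} For each $i$, pick $v_i \in V_i \setminus W_i$. Since $V_j \subseteq W_i$ for all $j>i$, a vector $\sum_{j \ge i} a_j v_j$ with $a_i \ne 0$ lies in $V_i \setminus W_i$. Indeed, $a_iv_i \in V_i\setminus W_i$ and every other summand lies in $W_i$. Hence every point of $\langle v_1,\dots,v_t\rangle$ has color $c_i$, where $i$ is the first index with a non-zero coefficient. The same fact shows that the $v_i$ are linearly independent. By pigeonhole, some color appears as $c_i$ for at least $r$ indices $i_1<\dots<i_r$. The subspace $\langle v_{i_1},\dots,v_{i_r}\rangle$ is $r$-dimensional, and every point in it has as first non-zero index one of the $i_j$, so it is homogeneous.

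\emph{Main obstacle.} The only delicate point is the helper step, namely ensuring that the homogeneous affine subspace avoids the origin. Giving $0$ its own color handles this. The remaining work is bookkeeping of the dimensions in the nested iteration.
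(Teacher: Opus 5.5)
Your proof is correct and follows essentially the same route as the paper. You build a nested chain of codimension-one pairs with monochromatic complements, which gives the ``color is determined by the first nonzero coefficient'' structure, and then finish by pigeonhole. The differences are cosmetic: the paper keeps the homogeneous affine subspace away from the origin by writing $\FF_q^n = \langle e\rangle \oplus V$ and coloring $v \in V$ by the color of $\langle e+v\rangle$, so it needs $c$ colors rather than your $c+1$, and it uses $cr$ vectors in the pigeonhole step rather than your slightly tighter $c(r-1)+1$.
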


In this appendix we show how to derive \Cref{pro:ramsey-projective} from \Cref{pro:ramsey-affine}.\footnote{We thank ChatGPT~5.6~Sol for this proof, which specializes the argument of Spencer~\cite{Spencer79}.}
\Cref{pro:ramsey-projective} follows almost directly from the following lemma, which we will prove inductively.

\begin{lemma} \label{lem:ramsey-projective-helper}
Suppose that the points in $\FF_q^n$ are colored using $c$ colors. For every $r$, provided that $n \ge \Nramprohelp(q,c,r)$, we can find $r$ linearly independent vectors $e_1,\dots,e_r \in \FF_q^n$ and $r$ colors $\gamma_1,\dots,\gamma_r$ such that the color of the point $\left\langle\sum_{i=1}^r a_i e_i \right\rangle$ is $\gamma_j$, where $j$ is the smallest index such that $a_j \ne 0$ (note that such an index must exist, and is well-defined).
\end{lemma}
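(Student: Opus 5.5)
We prove \Cref{lem:ramsey-projective-helper} by induction on $r$, peeling off the vector $e_1$ at each step and using \Cref{pro:ramsey-affine} to make the coloring depend only on the leading coordinate. For $r = 1$ we take $\Nramprohelp(q,c,1) = 1$: pick any nonzero $e_1$ and let $\gamma_1$ be the color of $\langle e_1 \rangle$. For the inductive step, we set
\[
 \Nramprohelp(q,c,r+1) = 1 + \Nramaff\bigl(q,c,\Nramprohelp(q,c,r)\bigr),
\]
and we work inside $\FF_q^n$ with $n \ge \Nramprohelp(q,c,r+1)$. Fix a hyperplane $H$ and a vector $e \notin H$, so that $\FF_q^n = H \oplus \langle e \rangle$ and $\dim H \ge \Nramaff(q,c,\Nramprohelp(q,c,r))$.

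The key step is to color each vector $h \in H$ by the color of the point $\langle e + h \rangle$. Every point not contained in $H$ has a unique representative of the form $e + h$, so this is well defined. By \Cref{pro:ramsey-affine} there is a homogeneous affine subspace $X = W + t \subseteq H$ of dimension $s := \Nramprohelp(q,c,r)$, with some common color $\gamma_1$. Set $e_1 := e + t$. Then for every $w \in W$, the point $\langle e_1 + w \rangle$ has color $\gamma_1$. Scaling does not change the point, so every point $\langle a_1 e_1 + w \rangle$ with $a_1 \ne 0$ and $w \in W$ also has color $\gamma_1$.

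Next we apply the induction hypothesis to the coloring of the points of $W$, an $s$-dimensional space. This yields linearly independent $e_2,\dots,e_{r+1} \in W$ and colors $\gamma_2,\dots,\gamma_{r+1}$ with the required leading-index property inside $W$. Since $e_1 \notin H \supseteq W$, the vectors $e_1,\dots,e_{r+1}$ are linearly independent. Now consider a nonzero vector $\sum a_i e_i$.
\begin{itemize}
\item If $a_1 \ne 0$, then $\sum_{i \ge 2} a_i e_i \in W$, so the point has color $\gamma_1$ by the previous paragraph.
\item If $a_1 = 0$, the claim is exactly the inductive conclusion inside $W$.
\end{itemize}
This completes the induction.

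The main subtlety, though it is mild, is the choice of coloring for the affine step. We must color vectors of $H$ through the affine chart $h \mapsto \langle e + h \rangle$, rather than coloring points. This is what makes all the multiples $a_1$ harmless. It also absorbs the translate $t$ into $e_1$, so we never need the homogeneous set to be a linear subspace.

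Although not required here, \Cref{pro:ramsey-projective} would then follow by taking $r = (c-1)k + 1$ with $k$ the target dimension. By pigeonhole, some color $\gamma$ occurs among $k$ indices $j_1 < \cdots < j_k$. The span of $e_{j_1},\dots,e_{j_k}$ is then homogeneous, because the leading index of any of its nonzero vectors lies among the $j$'s.
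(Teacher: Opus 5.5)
Your proof is correct and is essentially the paper's argument: you split $\FF_q^n$ into a hyperplane plus $\langle e\rangle$, color vectors $h$ of the hyperplane by the color of $\langle e+h\rangle$, apply \Cref{pro:ramsey-affine} to get $t+W$, set $e_1=e+t$, and recurse inside $W$, with the same recursion $\Nramprohelp(q,c,r+1)=\Nramaff(q,c,\Nramprohelp(q,c,r))+1$. The differences are cosmetic: you start the induction at $r=1$ rather than $r=0$, you spell out the leading-index check the paper leaves implicit, and your pigeonhole count $(c-1)k+1$ is slightly sharper than the paper's $ck$.
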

\begin{proof}
The proof is by induction on $r$. When $r = 0$, there is nothing to prove (and so we can take $\Nramprohelp(q,c,0) = 0$). Assuming that the lemma is known to hold for a certain value of $r$, we now prove it for $r + 1$.

Let $m = \Nramprohelp(q,c,r)$ and $\Nramprohelp(q,c,r+1) = \Nramaff(q,c,m) + 1$. Decompose $\FF_q^n$ as the direct sum of a point $\langle e \rangle$ and an $(n-1)$-dimensional subspace $V$. Construct a $c$-coloring $\chi_V$ of the \emph{vectors} in $V$ as follows:
\[
 \chi_V(v) = \chi(\langle e + v \rangle),
\]
where $\chi$ is the given $c$-coloring of the points of $\FF_q^n$. Applying \Cref{pro:ramsey-affine}, we obtain a $\chi_V$-homogeneous $m$-dimensional affine subspace $u + W$ of $V$, say $\chi(\langle e + u + w \rangle) = \gamma_1$ for all $w \in W$. Accordingly, we choose $e_1 = e + u$.
We obtain the remaining vectors $e_2,\dots,e_{r+1}$ and colors $\gamma_2,\dots,\gamma_{r+1}$ by applying the inductive hypothesis to $W$.
\end{proof}

\Cref{pro:ramsey-projective} follows almost directly.
\begin{proof}[Proof of \Cref{pro:ramsey-projective}]
Let $\Nrampro(q,c,r) = \Nramprohelp(q,c,cr)$. Apply \Cref{lem:ramsey-projective-helper} to the given coloring, obtaining $e_1,\dots,e_{cr}$ and $\gamma_1,\dots,\gamma_{cr}$.

By the pigeonhole principle, we can find a subset $I \subseteq [cr]$ of size $r$ such that the colors $\gamma_i$ for $i \in I$ are all equal. Thus the $r$-dimensional subspace generated by $e_i$ for $i \in I$ is homogeneous.
\end{proof}

\bibliographystyle{alphaurl}
\bibliography{biblio}

\end{document}